\newtheorem{theorem}{Theorem}[section]
\newtheorem{lemma}[theorem]{Lemma}
\theoremstyle{remark}
\newtheorem{remark}[theorem]{Remark}
\newcommand{\R}{\mathbb{R}}
\newcommand{\cR}{\mathcal{R}}
\newcommand{\cD}{\mathcal{D}}
\newcommand{\pt}{\partial_{t}}
\newcommand{\del}{\partial}
\newcommand{\pa}{\partial_{\alpha}}
\newcommand{\e}{\varepsilon}
\newcommand{\eps}{\varepsilon}
\renewcommand{\div}{\textrm{div}}
\newcommand{\vh}{\widehat{v}}
\newcommand{\fh}{\widehat{F}}
\newcommand{\yh}{\widehat{y}}
\newcommand{\xh}{\widehat{\Xi}}
\newcommand{\cof}{\mathop{\mathrm{cof}}}
\newcommand{\Real}{\R}
\newcommand{\mat}{\hbox{Mat}}
\newcommand{\dt}{\hbox{det}\,}
\numberwithin{equation}{section}
\begin{document}
\title [relaxation with polyconvex entropy]
{
Stress relaxation models with polyconvex entropy in
Lagrangean and Eulerian coordinates
%
%A relaxation theory with polyconvex entropy function
%\\
%      converging to elastodynamics
}

\author{Athanasios E. Tzavaras}

\address{Department of Applied Mathematics, University of 
Crete, Heraklion, Greece and Institute for Applied and Computational Mathematics, FORTH\@, Heraklion, Greece}

\email{tzavaras@tem.uoc.gr}

\date{}

\begin{abstract}
The embedding of the equations of polyconvex elastodynamics to an augmented 
symmetric hyperbolic system provides in conjunction with the relative entropy method
 a robust stability framework for approximate solutions \cite{LT06}.
We devise here a model of  stress relaxation motivated by the 
format of the enlargement process which formally approximates 
the equations of polyconvex elastodynamics.  The model is endowed with 
an entropy function which is not convex but rather of polyconvex type.
Using the relative entropy we prove a stability estimate and convergence
of the stress relaxation model to polyconvex elastodynamics in the
smooth regime. As an application, we show that models of pressure relaxation for
real gases in Eulerian coordinates fit into the proposed framework.
\end{abstract}
\subjclass[2000]{35L65, 36L75, 74B20, 82C40}
\keywords{polyconvex elasticity, relaxation limits, pressure relaxation in gases}

\dedicatory{To Marshall Slemrod on the occasion of his 70th birthday with friendship and admiration}

\date{}
\maketitle
\baselineskip 14pt
%\tableofcontents

\section{Introduction}
\label{intro}

The mechanical motion of a continuous medium with nonlinear elastic response 
is described by the system of partial differential equations
    \begin{equation}
        \frac{\del^{2} y}{\del t^{2}} = \nabla \cdot T (\nabla {y})
        \label{displacement}
    \end{equation}
where $y : \R^{3} \times \R^+ \to \R^{3}$ describes the motion,
$T$ is the Piola--Kirchhoff stress tensor,  $v = \del_{t} y$ is the velocity
and $F = \nabla y$ the deformation gradient.
Motivated by the requirements imposed on the theory of thermoelasticity
from consistency with the Clausius-Duhem inequality of thermodynamics, one often
imposes the assumption of hyperelasticity, i.e. that $T$ is expressed as a gradient
$T (F)  =  \frac{\del W }{\del F} (F)$ of the stored energy function $W : \text{Mat}^{3 \times 3} \to [0, \infty)$.
The principle of material frame indifference dictates that $W$
remains invariant under rotations
$$
W(O F) = W(F) \quad \text{for all orthogonal matrices $O \in O(3)$.}
$$

Convexity of the stored energy $W$ is too restrictive and even 
incompatible with certain physical requirements:
First, it conflicts with frame indifference in conjunction with the requirement 
that the energy increase without bound as $\dt F\to 0^+ $.
Second, convexity of the energy
together with the axiom of frame indifference impose 
restrictions on the induced Cauchy stresses that rule out certain
naturally occurring states of stress
({\it e.g.} \cite[Sec 8]{CN59}, \cite[Sec 4.8]{Cia93}).
As a result, it has been 
replaced in the theory of elastostatics by various weaker notions 
such as quasi-convexity, rank-1 convexity or polyconvexity, see
\cite{Bal77} or \cite{Bal02} for a recent survey.
Here, we adopt the assumption of polyconvexity 
which postulates that 
$$
W(F) = g (F, \cof F, \dt F) \, ,
$$
where $g$ is a strictly convex function of 
$\Phi(F) = (F , \cof F , \dt F)$,
and encompasses various interesting models ({\it e.g.} \cite{Cia93}).

The system (\ref{displacement}) may be recast as a system of 
conservation laws, for the velocity $v = \del_{t} y$ and the 
deformation gradient $F = \nabla y$, in the form
\begin{equation}
       \begin{aligned}
            \pt F_{i\alpha} &= \pa v_{i}   \\
           \pt v_{i} &= \pa T_{i\alpha}(F) \, ,
       \end{aligned}
       \label{elas}
\end{equation}
$i, \alpha = 1, \ldots, 3$.
The equivalence holds for solutions $(v,F)$ with $F = \nabla y$, 
i.e. subject to the set of differential constraints
\begin{equation}
\label{gconstraint}
\partial_{\beta} F_{i \alpha} - \partial_{\alpha} F_{i \beta} = 0 \, .
\end{equation}
Equation \eqref{gconstraint} is an involution:
if it holds initially it is propagated by \eqref{elas}$_{1}$
to hold for all times. The system \eqref{elas} is endowed with an additional conservation law
\begin{equation}
\del_t ( \frac{1}{2} v^{2} + W(F) ) - \del_\alpha ( v_i T_{i \alpha} (F) ) = 0
\end{equation}
manifesting the conservation of mechanical energy.
When $W$ is convex the "entropy" $E = \frac{1}{2} v^{2} + W(F)$ is a convex function.
Convexity of the entropy is known to provide a stabilizing mechanism for
thermomechanical processes, and entropy inequalities for convex entropies
have been employed in the theory of hyperbolic conservation laws as an 
admissibility criterion for weak solutions \cite{Lax71} and 
provide powerful stability frameworks for approximations of  classical solutions \cite{Daf79}, \cite{DiP79}. 
Such stability is attained via the relative entropy method and applies in 
particular to viscosity or even relaxation approximations of the system \eqref{elas}, \cite{LT06}, \cite[Ch V]{Daf10}.

By contrast, when $W$ is not convex the entropy $E = \frac{1}{2} v^{2} + W(F)$ is also non-convex,
what induces an array of questions regarding the stability of the model within its various approximating theories. 
One should distinguish between models where one tries to model inherently unstable phenomena
(like for example phase transitions) and models where one expects stable response but where the
invariance under rotations imposes degeneracies (like the problem of elasticity). 
Our objective is to contribute to a program \cite{Qin98, DST01, LT06} of understanding such issues 
and to suggest remedies especially as it pertains to the stable approximation
of elastodynamics by stress relaxation theories.

Relaxation approximations encompass many physical models and
have proved useful in designing efficient algorithms for systems of conservation laws 
({\it e.g.} \cite{CP97, Bou05, BKW07}) while convexity of the entropy is known to provide 
a stabilizing effect for general relaxation approximations  ({\it e.g} \cite{CLL94}, \cite{Tza04}).  
A natural relaxation approximation of  \eqref{elas} is given by the stress relaxation
theory
\begin{equation}
       \begin{aligned}
              \pt F_{i\alpha} &= \pa v_{i}   \\
             \pt v_{i} &= \pa S_{i\alpha}      \\
\pt(S_{i\alpha} - f_{i\alpha}(F)) &= -\frac{1}{\e}(S_{i\alpha}
-T_{i\alpha}(F)).
\end{aligned}
           \label{isorelax}
\end{equation}
This model may be visualized within the framework of viscoelasticity
with memory
$$
S = f(F) + \int_{-\infty}^{t} \frac{1}{\e}
     e^{-\frac{1}{\e} (t-\tau)} h(F(\cdot, \tau)) \,
     d\tau \,
$$
with the equilibrium stress $T(F)$ decomposed into an elastic
and viscoelastic contribution, $T(F) = f(F) + h(F)$, where
$f = \frac{\del W_{I}}{\del F}$ and $T = \frac{\del W}{\del F}$,
and a kernel exhibiting a single relaxation time $\eps$.
It belongs to the class of thermomechanical theories with internal variables which 
have been extensively studied in the mechanics literature {\it e.g.} \cite{CG67, FM87, Sul98, Tza99}.

The approximation \eqref{isorelax} is consistent
with the second law of thermodynamics,
provided the potential of the instantaneous elastic response $W_{I}$
dominates the potential of the equilibrium response $W$.
When $W$ is convex the relaxation theory has a convex entropy 
and a relative entropy calculation indicates that \eqref{isorelax} stably approximates \eqref{elas}  \cite{LT06}.
On the other hand, for polyconvex  $W$, the consistency with thermodynamics is
still attained but the entropy of the relaxation system loses convexity and the
stability of the approximating system is questionable.
Convexity of the entropy is a dictum of stability for relaxation 
approximations; at the same time it is not a consequence of 
thermodynamical consistency of relaxation theories with the Clausius-Duhem
inequality \cite{Sul98, LT06}.
As convexity is largely incompatible with material frame indifference, 
the effect of adopting weaker notions of convexity 
on the stability of thermomechanical processes needs to be further understood.

Our objective  is to propose a stable relaxation approximation scheme for the equations of polyconvex
elasticity. We will be guided by the embedding of polyconvex elasticity to an augmented
strictly hyperbolic system:
Due to nonlinear transport identities of the null-Lagrangians, the system \eqref{elas} with
polyconvex stored energy can be embedded into an augmented symmetric hyperbolic system 
\begin{equation}
\begin{aligned}
\label{enelas}
\del_{t} v_{i} &=
\del_{\alpha} \biggl(
\frac{\partial g}{\partial\Xi^A}(\Xi)\frac{\partial\Phi^A}{\partial
F_{i\alpha}}(F)\biggr)
\\
\del_{t} \Xi^{A} &= \del_{\alpha} \biggl(\frac{\partial\Phi^A}{\partial
F_{i\alpha}}(F)v_i\biggr)
\end{aligned}
\end{equation}
and be visualized as constrained evolution thereof (see \cite{Qin98, DST01} and section \ref{secvisco}
for an outline).  The augmented system admits the convex entropy 
$\eta = \frac{1}{2}|v|^{2} + g (\Xi)$ and is symmetrizable.
The idea of symmetrizable extensions of \eqref{elas} has important implications on the equations of  polyconvex elasticity,
providing stability frameworks between entropy weak and smooth solutions  \cite{LT06},  \cite[Ch V]{Daf10}
or even between entropic measure-valued and smooth solutions \cite{DST12}.
The idea of enlarging the number of variables and  extending to  symmetrizable hyperbolic systems  
has been fruitful in other contexts like for nonlinear models
of electromagnetism \cite{Bre04,Ser04} or for the isometric embedding problem in geometry \cite{Sle13}.

In the sequel, we consider the stress relaxation system 
\begin{equation}
\label{relpoly0}
\begin{aligned}
\del_{t} v_{i} - 
\pa \left ( T^{A} \frac{\partial \Phi^{A}}{\partial F_{i\alpha}} 
\right ) &= 0
\\
\del_{t} F_{i \alpha} - \pa v_{i} &= 0
\\
\del_{t} \left ( T^{A} -  \frac{\del \sigma_{I}}{\del \Xi^{A} } (\Phi(F) )\right )
&= - \frac{1}{\eps} 
\left ( T^{A} -  \frac{\del \sigma_{E}}{\del \Xi^{A} } (\Phi(F) )\right )
\\
\del_{\beta} F_{i \alpha} - \del_{\alpha} F_{i \beta} &= 0 \, .
\end{aligned}
\end{equation}
The format of \eqref{relpoly0} is motivated 
by an attempt to transfer the geometric structure of the limit to the approximating 
relaxation system.
Note that \eqref{relpoly0} formally approximates as $\eps \to 0$ 
the equations of polyconvex elastodynamics and retains the 
property of embedding into to an augmented relaxation system  
(see \eqref{polyrelax}) with the latter endowed with an entropy dissipation inequality
for a  convex entropy. The  reduced entropy inherited by \eqref{relpoly0} 
is of the form 
$$
\mathcal{E}= \frac{1}{2} |v|^{2}+ \Psi(\Phi(F), \tau)
$$
with $\Psi(\Xi,\tau)$ a convex function,  $\Phi(F)=(F, \cof F, \dt F)$, and thus $\mathcal{E}$
is  of polyconvex type. 
We prove
using a relative entropy computation and the null-Lagrangian structure
that this theory approximates in a stable way smooth solutions of 
\eqref{elas} with polyconvex stored energy. Our analysis indicates that it is possible to
stabilize a relaxation model via a globally defined, polyconvex entropy.

The system \eqref{relpoly0} appears unconventional as it mixes geometric and mechanical properties.
Nevertheless, it contains a very interesting example. When the equations of  isentropic
gas dynamics in Eulerian coordinates are adapted to this model, 
and after performing the proper transformations from 
Eulerian to Lagrangean coordinates, one achieves a model of relaxation of pressures
(see \eqref{gdpressrelax}) with an instantaneous and an equilibrium pressure response.
The latter is endowed with a globally defined, dissipative, convex entropy. Models of pressure
relaxation have been considered before in \cite{CG67, Sul98}. The novelty of the present one is the
existence of a global, convex entropy. This is in a similar spirit (but a different model) as the model of internal energy 
relaxation for gas dynamics pursued in \cite{CP97}. 

The article is organized as follows. In Section \ref{secvisco} we 
present the embedding of \eqref{elas} into the augmented system 
\eqref{enelas} and define the relative entropy. 
In section \ref{secrel} we state the augmented 
relaxation system \eqref{polyrelax}, show that it
is endowed with a convex entropy, and exhibit the inherited relative 
entropy calculation \eqref{relenpoly} for the system \eqref{relpoly0}.
This culminates into the stability and convergence Theorem 
\ref{theorelax} between solutions of the relaxation model \eqref{relpoly0}
and the polyconvex elastodynamics system \eqref{elas}.
As an application of the theory, in section \ref{secgd}, we develop an example of
pressure relaxation that converges to the equations of isentropic gas dynamics in Eulerian 
coordinates and is endowed with a convex entropy function.

The results of sections \ref{secrel} and \ref{secstab} are taken from an earlier unpublished version of this 
manuscript \cite{Tza05}.

\section{The symmetrizable extension of polyconvex elastodynamics}
\label{secvisco}

The system of elastodynamics \eqref{displacement} is
expressed in the form of a system of conservation laws \eqref{elas}, \eqref{gconstraint}.
As already noted, the equivalence of the two formulations holds for functions $F$ that
are gradients, but as the relation 
   $F = \nabla y$ propagates from the initial data, 
relation \eqref{gconstraint} is viewed as a constraint on the initial data
and is usually omitted.  We work under the framework of polyconvex hyperelasticity:
the Piola-Kirchoff stress is derived from a potential
$
T (F) = \frac{\del W(F)}{\del F}
$
and the stored energy
$W :\mat^{3\times 3}\to [0,\infty)$ factorizes as a 
function of the minors of $F$,
\begin{equation}
\label{polyconvex}
\begin{aligned}
W (F)= \big ( g\circ\Phi \big ) (F) \, , 
\quad \text{where} \; 
&\Phi(F)=(F,\cof F,\dt F) \, , 
%\label{cof}
\end{aligned}
\end{equation}
with $g:\mat^{3\times 3}\times \mat^{3\times 3}\times\Real\to\Real$ convex.
The cofactor matrix $\cof F$ and the determinant $\dt F$ are
\begin{align*}
\begin{split}
(\cof F)_{i\alpha} &=
\frac{1}{2}\epsilon_{ijk}\epsilon_{\alpha\beta\gamma}
F_{j\beta}F_{k\gamma}\, ,\\
\dt F &=\frac{1}{6}\epsilon_{ijk}\epsilon_{\alpha\beta\gamma}
F_{i\alpha}F_{j\beta}F_{k\gamma} = \frac{1}{3}(\cof F)_{i \alpha}F_{i 
\alpha}\, .
\end{split}
%\label{cofdef}
\end{align*}

We review a symmetrizable extension of polyconvex elastodynamics
\cite{DST01}, based on certain kinematic identities
on $\det F$ and $\cof F$ from \cite{Qin98}.
The components of $\Phi(F)$ 
are null Lagrangians and satisfy
 the identities
\begin{equation*}
\frac{\partial}{\partial x^\alpha}
\biggl(\frac{\partial\Phi^A}{\partial F_{i\alpha}}
%\biggr|_{F_{i\alpha}=\frac{\partial y_i}{\partial x^\alpha}}
(\nabla y)
\biggr)
\equiv 0
%\label{nullag}
\end{equation*}
for any smooth map $y(x,t)$. Equivalently, this is expressed as
\begin{equation}
\label{nullag2}
\del_{\alpha}
\biggl(\frac{\partial\Phi^A}{\partial F_{i\alpha}} (F)
\biggr)
= 0 \, ,
\quad \forall F \; \text{with} \;
\partial_{\beta} F_{i \alpha} - \partial_{\alpha} F_{i \beta} =0\, .
\end{equation}
The kinematic compatibility equation \eqref{elas}$_{1}$ implies
\begin{equation}
\begin{aligned}
\del_{t} \Phi^{A} (F) -
\del_{\alpha}
\biggl( v_i \frac{\partial\Phi^A}{\partial F_{i\alpha}}(F)\biggr) = 0
\, .
\label{kinnl}
\end{aligned}
\end{equation}
Strictly speaking \eqref{kinnl} do not form what is called in the theory of
conservation laws entropy - entropy flux pairs as they hold only for
$F$ that are gradients, i.e. $\forall F$ with
$\partial_{\beta} F_{i \alpha} - \partial_{\alpha} F_{i \beta} =0$.

This motivates to embed \eqref{elas}, \eqref{kinnl} into the system of conservation laws
\begin{equation}
\begin{aligned}
\label{enlarged}
\del_{t} v_{i} &=
\del_{\alpha} \biggl(
\frac{\partial g}{\partial\Xi^A}(\Xi)\frac{\partial\Phi^A}{\partial
F_{i\alpha}}(F)\biggr)
\\
\del_{t} \Xi^{A} &= \del_{\alpha} \biggl(\frac{\partial\Phi^A}{\partial
F_{i\alpha}}(F)v_i\biggr)\, .
\end{aligned}
\end{equation}
Note that $\Xi = (F,Z,w)$ takes values in
$\mat^{3\times 3}\times \mat^{3\times 3}\times\Real \simeq \R^{19}$
and is treated as a new dependent variable.
The extension has the following properties:
\begin{itemize}
\item[(i)] If $F(\cdot, 0)$ is a gradient then $F(\cdot, t)$ remains a
gradient $\forall t$.

\item[(ii)] If $\Xi (\cdot, 0) = \Phi (F(\cdot, 0))$ with
$F(\cdot, 0) = \nabla y_{0}$, then  
$\Xi (\cdot, t) = \Phi (F(\cdot, t))$ where $F(\cdot, t) = \nabla y (\cdot , t)$.
In other words, the system of elastodynamics can be visualized as
constrained evolution of \eqref{enlarged}.

\item[(iii)] The enlarged system admits a strictly convex entropy
\begin{equation*}
\eta(v,\Xi)=\frac{1}{2}|v|^2 + g(\Xi)
%\label{entropy}
\end{equation*}
and is thus symmetrizable (along solutions that are gradients).

\item[(iv)] The system is endowed with a relative entropy calculation, detailed below.
\end{itemize}

Property (iii) is  based on the null-Lagrangian structure
and $\eta$ is not an entropy in the usual sense
of the theory of conservation laws. Rather, the identity 
        \begin{equation}
        \label{enlencons}
            \pt \left [ \frac{1}{2}|v|^{2} + g(\Xi)\right ] - \pa
            \left[ \sum_{i,A} v_{i}\frac{\partial g(\Xi)}{\partial
       \Xi^{A}}\frac{\partial \Phi^{A}(F)}{\partial
       F_{i\alpha}}\right ] = 0
        \end{equation}
holds for $F$'s that are gradients.

Property (iv)  pertains to the following {\bf relative energy} calculation \cite{LT06}, \cite [Ch V]{Daf10}.
Let $y$ be an entropic weak solution satisfying the weak form of \eqref{elas}, \eqref{gconstraint} and the weak form
of the entropy inequality 
        \begin{equation*}
            \pt \left [ \frac{1}{2}|v|^{2} + g(\Phi(F) )\right ] - \pa
            \left[ \sum_{i,A} v_{i}\frac{\partial g(\Phi(F))}{\partial
       \Xi^{A}}\frac{\partial \Phi^{A}(F)}{\partial
       F_{i\alpha}}\right ] \le 0   \quad \mbox{in $\cD'$}.
        \end{equation*}
Then provided $F = \nabla y$ enjoys sufficient integrability properties, $F$ also satisfies the weak form of \eqref{kinnl}.
As a result $(v, \Xi)$ with $\Xi = \Phi(F)$ is a weak solution of \eqref{enlarged} which is entropic in the sense that
\begin{equation*}
            \pt \left [ \frac{1}{2}|v|^{2} + g(\Xi)\right ] - \pa
            \left[ \sum_{i,A} v_{i}\frac{\partial g(\Xi)}{\partial
       \Xi^{A}}\frac{\partial \Phi^{A}(F)}{\partial
       F_{i\alpha}}\right ] \le 0  \quad \mbox{in $\cD'$}.
        \end{equation*}
Let $\yh$ be a smooth solution of \eqref{displacement}. Then $(\vh, \fh)$ satisfy \eqref{elas}, \eqref{gconstraint}
and the augmented function $(\vh,  \xh)$ with $\xh = \Phi (\fh)$ satisfies the energy conservation \eqref{enlencons}.
Then, the two solutions $(v, \Phi(F))$ and $(\vh, \Phi(\fh))$ can be compared via the relative energy formula
\begin{equation}
       \label{eq:relativeentropyvisc}
\begin{aligned}
     & \del_{t} \Big ( \eta  (v,\Phi(F)  \;  | \;  \vh, \Phi(\fh) ) \Big )   - \nabla \cdot  \Big (  q (v,\Phi(F)  \;  | \;  \vh, \Phi(\fh) ) \Big ) 
           \le Q   \, , 
\end{aligned}
\end{equation}
where 
\begin{equation*}
\begin{aligned}
\eta (v,\Xi \;  | \;  \vh,\xh) &:=
\frac{1}{2} |v-\vh|^{2} + g(\Xi) - g(\xh)
            -\frac{\partial g(\xh)}{\partial \Xi^{A}}(\Xi^{A} - \xh^{A}) \, , 
\\
q^{\alpha} (v,\Xi \;  | \;  \vh,\xh)  &: =
   \left (\frac{\partial g(\Xi)}{\partial \Xi^{A}}
-\frac{\partial g(\xh)}{\partial
\Xi^{A}}\right)(v_{i}-\vh_{i})\frac{\partial \Phi^{A}(F)}{\partial
F_{i\alpha}}\, ,  \quad \mbox{$\alpha = 1,2,3$} \, , 
\end{aligned}
\end{equation*}
and  $Q$ is a quadratic error term of the form
\begin{align}
     &Q   : =  
     \Big [ \frac{\partial^{2}g }{\partial \Xi^{A}\partial \Xi^{B}}  ( \Phi  (\fh))  \Big ]
     \pa ( \Phi^{B} (\fh)) \left (\frac{\partial \Phi^{A}(F)}{\partial
F_{i\alpha}} - \frac{\partial \Phi^{A}(\fh)}{\partial
F_{i\alpha}} \right )(v_{i}-\vh_{i})
\nonumber\\
&\quad\ +
(\pa \vh_{i} ) \left (\frac{\partial g(\Phi (F) )}{\partial \Xi^{A}}
-\frac{\partial g( \Phi (\fh) )}{\partial
\Xi^{A}}\right)\left (\frac{\partial \Phi^{A}(F)}{\partial
F_{i\alpha}} - \frac{\partial \Phi^{A}(\fh)}{\partial
F_{i\alpha}} \right )
           \nonumber\\
            & \quad\ +  ( \pa \vh_{i} ) 
\Big ( \frac{\partial
g(\Xi)}{\partial \Xi^{A}}
- \frac{\partial g(\xh)}{\partial
\Xi^{A}} - \frac{\partial^{2}g(\xh)}{\partial \Xi^{A}\partial
            \Xi^{B}}(\Xi^{B} - \xh^{B}) 
\Big ) 
\Bigg |_{\Xi = \Phi(F), \; \xh = \Phi (\fh) }
\frac{\partial \Phi^{A}(\fh)}{\partial
F_{i\alpha}}\, .
     \label{quadraticQ}
       \end{align}
Details of the lengthy computation can be found in \cite{LT06}
and use in a substantial way the null-Lagrangian identity \eqref{nullag2}.
There is also available an analogous formula for comparing entropic (or dissipative) measure-valued solutions
to smooth solutions of \eqref{elas}, see \cite{DST12}.

%\vfil\eject

\section{A relaxation model  for polyconvex elastodynamics}
\label{secrel}

We next consider the stress relaxation model
\begin{equation}
\label{relpoly}
\begin{aligned}
\del_{t} v_{i} - 
\pa \left ( T^{A} \frac{\partial \Phi^{A}}{\partial F_{i\alpha}} 
\right ) &= 0
\\
\del_{t} F_{i \alpha} - \pa v_{i} &= 0
\\
\del_{t} \left ( T^{A} -  \frac{\del \sigma_{I}}{\del \Xi^{A} } (\Phi(F) )\right )
&= - \frac{1}{\eps} 
\left ( T^{A} -  \frac{\del \sigma_{E}}{\del \Xi^{A} } (\Phi(F) )\right )
\\
\del_{\beta} F_{i \alpha} - \del_{\alpha} F_{i \beta} &= 0 \, , 
\end{aligned}
\end{equation}
and wish to compare it to the equations of elastodynamics 
\begin{equation}
\label{pelas}
\begin{aligned}
\del_{t} v_{i} - 
\pa \left (  \frac{\del \sigma_{E}}{\del \Xi^{A} } (\Phi(F) )
\frac{\partial \Phi^{A}}{\partial F_{i\alpha}} 
\right ) = 0
\\
\del_{t} F_{i \alpha} - \pa v_{i} = 0 \, .
\end{aligned}
\end{equation}
The stress in the model \eqref{pelas} satisfies
$$
S_{\infty} = \frac{\del}{\del F} \sigma_{E} (\Phi (F))
$$
and thus, when $\sigma_{E}$ is convex, the model \eqref{pelas} corresponds
to polyconvex elasticity.

The model \eqref{relpoly} corresponds to a stress relaxation theory
where the stress is decomposed into an instantaneous and a 
viscoelastic part
\begin{equation}
S = T^{A} \frac{\partial \Phi^{A}}{\partial F} = \frac{\del 
( \sigma_{I} \circ \Phi )}{\del F} 
+ \tau^{A} \frac{\partial \Phi^{A}}{\partial F}
\end{equation}
and where the instantaneous elasticity is derived from a polyconvex potential
$\sigma_{I} (\Phi(F))$ while the viscoelastic part is determined by 
internal variables $\tau^{A}$ evolving according to the model
\begin{equation}
\label{intvode}
\del_{t} \tau^{A} = - \frac{1}{\eps} \Big ( \tau^{A} - 
\frac{\del (\sigma_{E} - \sigma_{I})}{\del \Xi^{A}}  (\Phi (F)) \Big ) \, .
\end{equation}

Note that when expressed in terms of the 
motion $y$ the model \eqref{relpoly} takes the form
\begin{equation}
\begin{aligned}
\frac{\del^{2}y}{\del t^{2}} &= \nabla \cdot \Big ( 
\frac{\del 
( \sigma_{I} \circ \Phi )}{\del F} (\nabla y)
+ \tau^{A} \frac{\partial \Phi^{A}}{\partial F}(\nabla y)
\Big )
\\
\frac{\del \tau^{A}}{\del t} &= - \frac{1}{\eps} \Big ( \tau^{A} - 
\frac{\del (\sigma_{E} - \sigma_{I})}{\del \Xi^{A}}  (\Phi (\nabla y)) \Big )
\end{aligned}
\end{equation}
Of course it may recast in the form of a theory with memory by 
integrating \eqref{intvode}.
We will see that the model \eqref{relpoly} has very interesting
structural properties.

\subsection{The augmented relaxation system}

The format of the stress relaxation model \eqref{relpoly}
is motivated (and was guided) by the enlargement structure
of the polyconvex elastodynamics system \eqref{pelas} described in section 
\ref{secvisco}.

Indeed, \eqref{relpoly} can be embedded into the augmented relaxation system
\begin{equation}
\label{polyrelax}
\begin{aligned}
\del_{t} v_{i} - \del_{\alpha} 
\Big ( \frac{\del \Phi^{A}}{\del F_{i \alpha}} T^{A} \Big ) &=0
\\
\del_{t} \Xi^{A} - \del_{\alpha} 
\Big ( \frac{\del \Phi^{A}}{\del F_{i \alpha}} v_{i} \Big ) &=0
\\
\del_{t} 
\Big ( T^{A} - \frac{\del \sigma_{I}}{\del \Xi^{A}}(\Xi) \Big )
&= -\frac{1}{\eps} 
\Big ( T^{A} - \frac{\del \sigma_{E}}{\del \Xi^{A}}(\Xi)  \Big )
\end{aligned}
\end{equation}
The stress function in the model \eqref{polyrelax} reads:
\begin{equation}
\label{stressv}
S_{i \alpha} = T^{A} \frac{\del \Phi^{A}}{\del F_{i \alpha}} \, .
\end{equation}
Note that as $\eps \to 0$ the stress $S_{i \alpha}$ formally approximates  the limiting stress
$$
S_{i \alpha , \infty} = T^{A} (\Xi) \Big |_{eq} \frac{\del \Phi^{A}}{\del F_{i \alpha}}
= \frac{\del \sigma_{E}}{\del \Xi^{A}}(\Xi)  
\frac{\del \Phi^{A}}{\del F_{i \alpha}}
$$
and thus \eqref{polyrelax} will approximate the extended elastodynamics system
\begin{equation}
\label{exelas}
\begin{aligned}
\del_{t} v_{i} - 
\del_{\alpha} \Big ( \frac{\del \sigma_{E}}{\del \Xi^{A}}(\Xi) 
\frac{\del \Phi^{A}}{\del F_{i \alpha}} \Big )  &=0
\\
\del_{t} \Xi^{A} - \del_{\alpha} 
\Big ( \frac{\del \Phi^{A}}{\del F_{i \alpha}} v_{i} \Big ) &=0
\end{aligned}
\end{equation}
Observe also that solutions of \eqref{relpoly} satisfy the kinematic 
constraints \eqref{kinnl} and thus, for a polyconvex stored energy,
the relaxation system \eqref{relpoly} enjoys the same relation with the system 
\eqref{polyrelax}  as the equations of polyconvex elastodynamics \eqref{elas} 
have with the system \eqref{enlarged}.

Next, we develop the Chapman-Enskog expansion for
the relaxation limit from 
\eqref{polyrelax} to \eqref{exelas}. 
Introduce the expansion for the internal variable $T^{A}$
$$
T^{A , \eps} = T^{A}_{0} + \eps T^{A}_{1} + O(\eps^{2})
$$
and, accordingly,
$$
S_{i \alpha}^{\eps} = T^{A}_{0}  \frac{\del \Phi^{A}}{\del F_{i \alpha}}
+ \eps T^{A}_{1}  \frac{\del \Phi^{A}}{\del F_{i \alpha}} + O(\eps^{2})
$$
to \eqref{polyrelax} in order to obtain
$$
\begin{aligned}
T^{A}_{0} & = \frac{\del \sigma_{E}}{\del \Xi^{A}}(\Xi)
\\
\del_{t} \Big ( \frac{\del \sigma_{E}}{\del \Xi^{A}}(\Xi) -
\frac{\del \sigma_{I}}{\del \Xi^{A}}(\Xi)   \Big )
&= - T^{A}_{1} + O(\eps)
\end{aligned}
$$
The effective momentum equation becomes
$$
\begin{aligned}
\del_{t} v_{i} - 
\pa \Big ( T_{0}^{A}  \frac{\del \Phi^{A}}{\del F_{i \alpha}} \Big )
&= \eps \pa 
\Big ( T_{1}^{A}  \frac{\del \Phi^{A}}{\del F_{i \alpha}} \Big )
+ O(\eps^{2})
\\
&= \eps \pa 
 ( D_{i \alpha}^{j \beta} \del_{\beta} v_{j}  )
+ O(\eps^{2})
\end{aligned}
$$
where
\begin{equation}
\label{defd}
D_{i \alpha}^{j \beta} :=
\frac{\del^{2}(\sigma_{I}-\sigma_{E})}{\del \Xi^{A} \del \Xi^{B}}
\frac{\del \Phi^{A}}{\del F_{i \alpha}} 
\frac{\del \Phi^{B}}{\del F_{j \beta}}
\end{equation}
In summary, the Chapman-Enskog expansion shows that as $\eps \to 0$ the relaxation process is approximated
by the hyperbolic-parabolic system
$$
\begin{aligned}
\del_{t} \Xi^{A} - \del_{\alpha} 
\Big ( \frac{\del \Phi^{A}}{\del F_{i \alpha}} v_{i} \Big ) &=0
\\
\del_{t} v_{i} - 
\pa \Big ( T_{0}^{A}  \frac{\del \Phi^{A}}{\del F_{i \alpha}} \Big )
&= \eps \pa   ( D_{i \alpha}^{j \beta} \del_{\beta} v_{j}  )
\end{aligned}
$$
Note that for $\Sigma : = \sigma_{I} - \sigma_{E}$ convex the 
diffusivity tensor $D$ satisfies 
the ellipticity condition
$
D_{i \alpha}^{j \beta} M_{i \alpha} M_{j \beta} \ge 0 \, ,
\;  \forall M \in \R^{3 \times 3} \, .
$
The latter is stronger than the Legendre-Hadamard condition, and 
is achieved when both the instantaneous potential 
$\sigma_{I} \circ \Phi$ and the equilibrium potential $\sigma_{E} \circ \Phi$
are polyconvex.

\subsection{Entropy of the augmented relaxation system}
\label{sec3a}

We next construct an entropy for the augmented relaxation system:
If  a function $\Psi (\Xi, \tau)$ can be constructed defined
$\forall \, (\Xi, \tau)$ and satisfying
\begin{equation}
\label{entrodef}
\begin{aligned}
\frac{\del \Psi}{\del \Xi^{A}}(\Xi,\tau) &= T^{A} = 
\frac{\del \sigma_{I}(\Xi)}{\del \Xi^{A}} + \tau^{A}  
\\
\frac{\del \Psi}{\del \tau^{A}} \, 
&\Big ( \tau^{A} - \frac{\del (\sigma_{E} - \sigma_{I} )}{\del \Xi^{A}}\Big )
\ge 0 \qquad  \forall \,  (\Xi, \tau) \, ,
\end{aligned}
\end{equation}
then the relaxation system is endowed with an H-theorem
\begin {equation}
\del_{t} \Big ( \frac{1}{2} |v|^{2} + \Psi (\Xi, \tau) \Big ) 
- \del_{\alpha} \big (  v_{i} S_{i \alpha} \big )  + \frac{1}{\eps}
\frac{\del \Psi}{\del \tau^{A}} \, 
\big ( \tau^{A} - \frac{\del (\sigma_{E} - \sigma_{I} )}{\del \Xi^{A}}\big )
= 0 \, .
\end{equation}

This entropy identity is based on the null-Lagrangian property \eqref{nullag2}
and follows, using \eqref{polyrelax}, \eqref{nullag2}
and \eqref{entrodef}, by the computation
\begin{align*}
\del_{t} \Big ( \frac{1}{2} |v|^{2} &+ \Psi (\Xi, \tau) \Big )
= v_{i} \del_{t} v_{i} + \frac{\del \Psi}{\del \Xi^{A}} \del_{t} \Xi^{A} 
   + \frac{\del \Psi}{\del \tau^{A}} \del_{t} \tau^{A}
\\
&= v_{i} \pa S_{i \alpha} + \frac{\del \Psi}{\del \Xi^{A}}
\pa ( \frac{\del \Phi^{A}}{\del F_{i \alpha}} v_{i}  )
 + \frac{\del \Psi}{\del \tau^{A}} \del_{t} \tau^{A}
 \\
&=  v_{i} \pa S_{i \alpha} + \frac{\del \Psi}{\del \Xi^{A}}
\frac{\del \Phi^{A}}{\del F_{i \alpha}} \pa v_{i}
 + \frac{\del \Psi}{\del \tau^{A}} \del_{t} \tau^{A}
 \\
&= \pa (v_{i} S_{i \alpha} ) - 
\frac{1}{\eps}
\frac{\del \Psi}{\del \tau^{A}} \, 
\big ( \tau^{A} - \frac{\del (\sigma_{E} - \sigma_{I} )}{\del \Xi^{A}}\big )
\end{align*}

Our next objective is to examine the solvability of \eqref{entrodef} and 
study the convexity of the entropy. Integrating \eqref{entrodef}$_{1}$, we see that
\begin{equation}
\label{potenergy}
\Psi(\Xi, \tau) = \sigma_{I} (\Xi) + \Xi \cdot \tau + G(\tau)
\end{equation}
where the integrating factor $G(\tau)$ has to be selected so that it 
satisfies the inequality
\begin{equation}
\label{ineq2}
(\Xi + \nabla_{\tau} G) \cdot (\tau + \nabla_{\Xi} \Sigma)
\ge 0
\quad \forall \, (\Xi, \tau)
\end{equation}
where $\Sigma = \sigma_{I} - \sigma_{E}$.
Regarding the solvability of \eqref{ineq2},  we show

\begin{lemma}
\label{lemex}
The functions  $G(\tau)$  and $\Sigma(\Xi)$ in $C^2 (\R^m)$ satisfy \eqref{ineq2} 
%\begin{equation}
%\label{ineq2}
%(\Xi + \nabla_{\tau} G) \cdot (\tau + \nabla_{\Xi} \Sigma)
%\ge 0 
%\quad \forall \, (\Xi, \tau) \, ,
%\end{equation}
if and only if
\begin{equation}
\label{char}
\begin{cases}
\Xi + \nabla_{\tau} G = 0 \quad  \text{ iff} \quad
\tau + \nabla_{\Xi} \Sigma = 0   & \\
G \; \text{is convex}  & \\
\Sigma \; \text{is convex}  & \\
\end{cases}
\end{equation}
\end{lemma}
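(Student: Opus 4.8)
The plan is to establish the two implications separately, writing throughout
$N(\Xi,\tau):=\big(\Xi+\nabla_\tau G(\tau)\big)\cdot\big(\tau+\nabla_\Xi\Sigma(\Xi)\big)$, so that \eqref{ineq2} reads $N\ge0$ on $\R^m\times\R^m$. For the implication \eqref{char} $\Rightarrow$ \eqref{ineq2}, I would fix an arbitrary $(\Xi,\tau)$ and set $q:=-\nabla_\Xi\Sigma(\Xi)$, so that $q+\nabla_\Xi\Sigma(\Xi)=0$. The equivalence \eqref{char}$_{1}$, applied at the point $(\Xi,q)$, then forces $\Xi+\nabla_\tau G(q)=0$, i.e.\ $\nabla_\tau G(q)=-\Xi$. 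Hence $\Xi+\nabla_\tau G(\tau)=\nabla_\tau G(\tau)-\nabla_\tau G(q)$ and $\tau+\nabla_\Xi\Sigma(\Xi)=\tau-q$, so
\[
N(\Xi,\tau)=\big(\nabla_\tau G(\tau)-\nabla_\tau G(q)\big)\cdot(\tau-q)\;\ge\;0 ,
\]
the inequality holding because convexity of $G$ makes $\nabla_\tau G$ a monotone operator. (In fact only \eqref{char}$_{1}$ and the convexity of $G$ enter here; the convexity of $\Sigma$ is then automatic, since \eqref{char}$_{1}$ identifies $-\nabla_\Xi\Sigma$ with the inverse of the map $-\nabla_\tau G$, whence $\Sigma(\Xi)=G^{*}(-\Xi)$ up to an additive constant.)

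For the converse \eqref{ineq2} $\Rightarrow$ \eqref{char}, I would first recover \eqref{char}$_{1}$ by a first-order, one-variable perturbation. Assume $\Xi_0+\nabla_\tau G(\tau_0)=0$. Perturbing only $\Xi$, the first factor of $N$ at $(\Xi_0+sz,\tau_0)$ is exactly $sz$, so $N(\Xi_0+sz,\tau_0)=s\,z\cdot\big(\tau_0+\nabla_\Xi\Sigma(\Xi_0)\big)+O(s^{2})$; since this must be $\ge0$ for $s$ of either sign, $z\cdot\big(\tau_0+\nabla_\Xi\Sigma(\Xi_0)\big)=0$ for all $z$, i.e.\ $\tau_0+\nabla_\Xi\Sigma(\Xi_0)=0$, and the reverse implication follows by the symmetric perturbation of $\tau$. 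With \eqref{char}$_{1}$ available, the convexity statements come from a second-order expansion about base points lying on the equilibrium manifold $\{\tau+\nabla_\Xi\Sigma(\Xi)=0\}=\{\Xi+\nabla_\tau G(\tau)=0\}$: given any $\Xi_0$ put $\tau_0:=-\nabla_\Xi\Sigma(\Xi_0)$, and dually given any $\tau_0$ put $\Xi_0:=-\nabla_\tau G(\tau_0)$, so that by \eqref{char}$_{1}$ both factors of $N$ vanish at $(\Xi_0,\tau_0)$. Expanding along $\Xi=\Xi_0+sz$, $\tau=\tau_0+sw$ and using $G,\Sigma\in C^{2}$, the factors become $s\big(z+\nabla^{2}_\tau G(\tau_0)w\big)+O(s^{2})$ and $s\big(w+\nabla^{2}_\Xi\Sigma(\Xi_0)z\big)+O(s^{2})$; dividing $N\ge0$ by $s^{2}$ and letting $s\to0$ gives
\[
\big(z+\nabla^{2}_\tau G(\tau_0)w\big)\cdot\big(w+\nabla^{2}_\Xi\Sigma(\Xi_0)z\big)\;\ge\;0\qquad\text{for all }z,w\in\R^{m} .
\]
Choosing $w=0$ gives $z\cdot\big(\nabla^{2}_\Xi\Sigma(\Xi_0)z\big)\ge0$ and choosing $z=0$ gives $w\cdot\big(\nabla^{2}_\tau G(\tau_0)w\big)\ge0$; as $\Xi_0$ and $\tau_0$ were arbitrary, $\Sigma$ and $G$ are convex on $\R^{m}$.

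The only genuinely delicate point is this choice of base point in the converse direction: the convexity information resides in the $O(s^{2})$ coefficient of $N$, and it becomes accessible only when \emph{both} factors of $N$ vanish at the base point, that is, when the base point lies on the equilibrium manifold. This is precisely why \eqref{char}$_{1}$ must be proved first — it is what makes such a base point available for every prescribed $\Xi_0$ (or $\tau_0$). Everything else reduces to routine Taylor expansion and the elementary characterization of convexity by monotonicity of the gradient.
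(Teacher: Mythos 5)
Your proof is correct, and its skeleton matches the paper's: you first extract \eqref{char}$_{1}$ from the sign of the linear term under a one-variable perturbation off a point where one factor vanishes (this is exactly the paper's argument with $\Xi=\Xi^{0}+te^{A}$), and you prove the forward implication by using \eqref{char}$_{1}$ to rewrite the product as a gradient difference paired with the corresponding increment, which is the paper's ``re-express the convexity inequality'' step (you use monotonicity of $\nabla_{\tau}G$, the paper uses the mirror-image inequality \eqref{ineqconvex} for $\Sigma$; the two are interchangeable). The one step where you genuinely diverge is the extraction of convexity in the direction \eqref{ineq2} $\Rightarrow$ \eqref{char}: you perform a second-order Taylor expansion about a base point on the equilibrium manifold and read off nonnegativity of the Hessians of $\Sigma$ and $G$ (correctly noting that both choices of base point, $\tau_{0}=-\nabla_{\Xi}\Sigma(\Xi_{0})$ and $\Xi_{0}=-\nabla_{\tau}G(\tau_{0})$, are needed to cover all arguments of both functions). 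The paper instead fixes $\Xi_{1},\Xi_{2}$, sets $\tau_{2}=-\nabla_{\Xi}\Sigma(\Xi_{2})$, and evaluates \eqref{ineq2} at $(\Xi_{1},\tau_{2})$, so that the inequality \emph{is} the monotonicity inequality \eqref{ineqconvex} with no expansion at all; this is first order, would work under $C^{1}$ regularity alone, and delivers directly the monotonicity form that is reused later (e.g. in \eqref{pr1}), whereas your route uses the $C^{2}$ hypothesis, which the lemma does grant. Your parenthetical observation that the forward implication needs only \eqref{char}$_{1}$ together with convexity of $G$ (convexity of $\Sigma$ then being automatic, since \eqref{char}$_{1}$ makes $-\nabla_{\Xi}\Sigma$ and $-\nabla_{\tau}G$ mutually inverse, hence $\nabla_{\Xi}\Sigma$ monotone) is a correct minor sharpening not stated in the paper.
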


\noindent
Equation \eqref{char}$_1$ indicates that $G(\tau)$ and $\Sigma (\Xi)$ are connected through the Legendre transformation.

\begin{proof}
We first show that \eqref{ineq2} implies \eqref{char}. 
Fix $\Xi^{0}$, $\tau^{0}$
such that $\Xi^{0} + \nabla_{\tau} G (\tau^{0}) = 0$. Consider a fixed direction
$e^{A}$ and the increment along this direction $\Xi = \Xi^{0} + t e^{A}$. Then 
\eqref{ineq2} implies that 
$e^{A} \cdot (\tau^{0} + \nabla_{\Xi} \Sigma (\Xi^{0})) = 0$ for every
direction $e^{A}$ and thus 
$\tau^{0} + \nabla_{\Xi} \Sigma (\Xi^{0}) = 0$.
Similarly, if $\Xi^{0}$, $\tau^{0}$ are such that 
$\tau^{0} + \nabla_{\Xi} \Sigma (\Xi^{0}) = 0$ then also
$\Xi^{0} + \nabla_{\tau} G (\tau^{0}) = 0$. This proves the first statement
in \eqref{char}.

Fix now $\Xi_{1}$, $\Xi_{2}$ and let 
$\tau_{2} = - \nabla_{\Xi} \Sigma (\Xi_{2})$. 
Then $\Xi_{2} = -\nabla_{\tau} G (\tau_{2})$, \eqref{ineq2} is 
rewritten as
\begin{equation}
\label{ineqconvex}
(\Xi_{1} - \Xi_{2} ) \cdot 
\big ( \nabla_{\Xi} \Sigma (\Xi_{1}) - \nabla_{\Xi}  \Sigma 
(\Xi_{2}) \big ) 
\ge 0
\end{equation}
and $\Sigma$ is convex. A similar argument shows that $G$ is convex.

The converse is proved by re-expressing 
the convexity inequality \eqref{ineqconvex} in the form \eqref{ineq2} 
by using the first statement in the 
right of \eqref{char}.
\end{proof}

Lemma \ref{lemex} indicates that the solvability of \eqref{entrodef} 
is equivalent to the convexity of  $\Sigma := \sigma_{I} - \sigma_{E}$. 
To complete the details of the construction of $\Psi$, we assume for simplicity
that
\begin{equation}
\label{hypo0}
\nabla^{2}_{\Xi} \Sigma > 0 \quad \text{and} \quad
\nabla_{\Xi} \Sigma : \R^{D} \to \R^{D} \; \text{is onto},
\tag{h$_{0}$}
\end{equation}
with $D=19$ for $d=3$ and $D=5$ for $d=2$.
Define the inverse map $(\nabla_{\Xi} \Sigma)^{-1} : \R^{D} \to \R^{D}$, 
and let $h(\tau) = - (\nabla_{\Xi} \Sigma)^{-1} (-\tau)$. 
Then $\nabla_{\tau} h$ is symmetric and the 
differential system $\nabla_{\tau} G = h$ is solvable. Its solution 
$G$ is a convex function and satisfies
\begin{equation}
\begin{aligned}
\label{prop1}
\nabla_{\tau} G (\tau) &=  - \left ( \nabla_{\Xi} \Sigma \right )^{-1} (-\tau)
\\
\nabla^{2}_{\tau} G (\tau) &= 
\left [ \nabla^{2}_{\Xi} \Sigma  (- \nabla_{\tau} G) \right ]^{-1}
\end{aligned}
\end{equation}
$\Psi$ is defined by \eqref{potenergy} with $G$ as above.
Observe that, by \eqref{entrodef} and \eqref{char},
\begin{equation}
\label{prop2}
\begin{aligned}
\frac{\del \Psi}{\del \Xi^{A}} (\Xi, -\nabla_{\Xi} \Sigma ) &=
\frac{\del \sigma_{E}}{\del \Xi^{A}} (\Xi)
\\
\frac{\del \Psi}{\del \tau^{A}} (\Xi, -\nabla_{\Xi} \Sigma ) &=
\Xi^{A} + \frac{\del G}{\del \tau^{A}} \Big |_{\tau^{A} = - 
\frac{\del(\sigma_{I} - \sigma_{E})}{\del \Xi^{A}} } = 0
\end{aligned}
\end{equation}
and, by selecting a normalization constant,
\begin{equation}
\label{prop3}
\Psi (\Xi , - \nabla_{\Xi} \Sigma  ) = \sigma_{E} (\Xi)
\end{equation}

We next consider the convexity of $\Psi (\Xi, \tau)$ determined by the 
matrix 
$$
\nabla^{2}_{(\Xi, \tau)} \Psi =
    \begin{bmatrix} 
    \nabla^{2}_{\Xi} \sigma_{I}  &  \mathbb{I}
\\ 
\mathbb{I}   & \nabla^{2}_{\tau} G
\end{bmatrix}
$$

\begin{lemma}
\label{lemcon}
Let $\Sigma = \sigma_{I} - \sigma_{E}$ satisfy \eqref{hypo0} and assume
that $\sigma_{I}$, $\Sigma$ satisfy for $\gamma_{I} > \gamma_{v} > 0$ 
\begin{equation}
\label{hypo1}
\nabla^{2}_{\Xi} \sigma_{I} \ge \gamma_{I} \mathbb{I}_{m}  > \gamma_{v} \mathbb{I}_{m} \ge
\nabla^{2}_{\Xi} \Sigma > 0
\tag{h$_{1}$}
\end{equation}
Then for some $\delta >0$ we have
$$
\nabla^{2}_{(\Xi, \tau)} \Psi \ge \delta \;  \mathbb{I}_{(\Xi, \tau)}
$$
\end{lemma}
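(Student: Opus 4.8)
The plan is to work directly with the quadratic form carried by
\[
\nabla^{2}_{(\Xi, \tau)} \Psi =
\begin{bmatrix}
\nabla^{2}_{\Xi} \sigma_{I}  &  \mathbb{I}
\\
\mathbb{I}   & \nabla^{2}_{\tau} G
\end{bmatrix},
\]
exploiting the identity $\nabla^{2}_{\tau} G (\tau) = \bigl[ \nabla^{2}_{\Xi} \Sigma  (- \nabla_{\tau} G(\tau)) \bigr]^{-1}$ recorded in \eqref{prop1} (which is where hypothesis \eqref{hypo0} enters, to make $\nabla^{2}_{\tau}G$ well defined). For a test vector with $\Xi$-component $a$ and $\tau$-component $b$, the form equals
\[
Q(a,b) := a \cdot \nabla^{2}_{\Xi}\sigma_{I}\, a + 2\, a\cdot b + b \cdot \nabla^{2}_{\tau} G\, b ,
\]
and the goal is $Q(a,b) \ge \delta(|a|^{2}+|b|^{2})$ with $\delta>0$ independent of $(\Xi,\tau)$.

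First I would estimate the diagonal terms using \eqref{hypo1}. The bound $\nabla^{2}_{\Xi}\sigma_{I} \ge \gamma_{I}\mathbb{I}$ gives $a \cdot \nabla^{2}_{\Xi}\sigma_{I}\, a \ge \gamma_{I}|a|^{2}$; and from $0 < \nabla^{2}_{\Xi}\Sigma \le \gamma_{v}\mathbb{I}$, the monotonicity of matrix inversion on positive definite matrices yields $\nabla^{2}_{\tau} G = [\nabla^{2}_{\Xi}\Sigma]^{-1} \ge \gamma_{v}^{-1}\mathbb{I}$, hence $b \cdot \nabla^{2}_{\tau} G\, b \ge \gamma_{v}^{-1}|b|^{2}$. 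These hold pointwise with constants depending only on $\gamma_{I}$ and $\gamma_{v}$, even though $\sigma_{I}$ and $\Sigma$ are evaluated at the distinct points $\Xi$ and $-\nabla_{\tau}G(\tau)$. For the cross term I would use Young's inequality, $2\,a\cdot b \ge -\mu|a|^{2} - \mu^{-1}|b|^{2}$ for any $\mu>0$, to get
\[
Q(a,b) \ge (\gamma_{I} - \mu)|a|^{2} + \bigl(\gamma_{v}^{-1} - \mu^{-1}\bigr)|b|^{2}.
\]
Since $\gamma_{I} > \gamma_{v} > 0$, the interval $(\gamma_{v},\gamma_{I})$ is nonempty; taking $\mu = \tfrac12(\gamma_{I}+\gamma_{v})$ makes both coefficients strictly positive — $\gamma_{I}-\mu = \tfrac12(\gamma_{I}-\gamma_{v})$ and $\gamma_{v}^{-1}-\mu^{-1} = (\gamma_{I}-\gamma_{v})/(2\gamma_{v}\mu)$ — so the lemma follows with $\delta := \min\bigl\{\tfrac12(\gamma_{I}-\gamma_{v}),\,(\gamma_{I}-\gamma_{v})/(2\gamma_{v}\mu)\bigr\}>0$.

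I do not anticipate a real obstacle: the only subtle points are the matrix inequality $[\nabla^{2}_{\Xi}\Sigma]^{-1} \ge \gamma_{v}^{-1}\mathbb{I}$ and the requirement that $\delta$ not depend on the base point, both of which the direct quadratic-form computation makes transparent. As a cross-check one can argue via Schur complements: using the $(\tau,\tau)$ block $\nabla^{2}_{\tau}G = [\nabla^{2}_{\Xi}\Sigma]^{-1}$ as pivot, the Schur complement of $\nabla^{2}_{(\Xi,\tau)}\Psi$ is $\nabla^{2}_{\Xi}\sigma_{I} - \nabla^{2}_{\Xi}\Sigma \ge (\gamma_{I}-\gamma_{v})\mathbb{I} > 0$ — a uniform form of the convexity of $\sigma_{E}$ encoded in \eqref{hypo1} — which already gives positive definiteness, while a little extra bookkeeping as above upgrades this to the uniform bound $\delta\,\mathbb{I}$.
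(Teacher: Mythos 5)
Your proposal is correct and is essentially the paper's own argument: both estimate the quadratic form of the block Hessian using $\nabla^{2}_{\Xi}\sigma_{I}\ge\gamma_{I}\mathbb{I}$ and $\nabla^{2}_{\tau}G=[\nabla^{2}_{\Xi}\Sigma]^{-1}\ge\gamma_{v}^{-1}\mathbb{I}$ (via \eqref{prop1}), then absorb the cross term by Young's inequality with a parameter chosen strictly between $\gamma_{v}$ and $\gamma_{I}$. Your separation of the Young parameter $\mu$ from the final constant $\delta$, and the Schur-complement cross-check, are minor refinements of the same computation.
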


\begin{proof} Using \eqref{hypo1} and \eqref{prop1}$_2$ we estimate the Hessian of $\Psi$ as follows
$$
\begin{aligned}
\left ( \nabla^{2}_{(\Xi, \tau)} \Psi \right ) \; (\Xi, \tau) \cdot (\Xi, \tau)
&= 
(\nabla_{\Xi}^{2} \sigma_{I}) \Xi \cdot \Xi + 2 \Xi \cdot \tau +
\big ( \nabla^{2}_{\Xi} \Sigma \big )^{-1} \tau \cdot \tau
\\
&\ge \gamma_{I} |\Xi|^{2} + 2 \Xi \cdot \tau + \frac{1}{\gamma_{v}} |\tau|^{2}
\\
&\ge
( \gamma_{I} - \delta ) |\Xi|^{2} + 
  \Big ( \frac{1}{\gamma_{v}} - \frac{1}{\delta} \Big ) |\tau|^{2} \, .
\end{aligned}
$$
The coefficients  can be made positive definite by selecting 
$\gamma_{I} > \delta > \gamma_{v}$.
\end{proof}

\begin{remark} \rm
Hypothesis \eqref{hypo1} implies that  $\sigma_{E}$ must be convex,
which dictates that the limiting equations arise from a polyconvex 
energy.
\end{remark}

\subsection{Relative entropy for the augmented system}
\label{sec3b}

Next we compare a solution $(v, \Xi, \tau)$ of the system 
\eqref{polyrelax} with a solution $(\vh, \xh)$ of the extended 
elastodynamics system \eqref{exelas}, using a relative 
entropy calculation in the spirit of \cite{LT06, Tza04}.

The relative entropy is defined by taking the Taylor polynomial of a nonequilibrium
 relative to a Maxwellian solution
$$
\begin{aligned}
\mathcal E_{r} &:= \frac{1}{2} |v - \hat v|^{2} +
     \Psi (\Xi , \tau ) - 
     \Psi \Big ( \xh , \frac{\del (\sigma_{E} - \sigma_{I} )}{\del \Xi} (\xh) 
     \Big )
     \\
     &-
     \frac{\del \Psi}{\del \Xi } 
     (\xh, - \frac{\del \Sigma}{\del \Xi} (\xh) ) \cdot (\Xi  - \xh) -
     \frac{\del \Psi}{\del \tau }  \Big (
     \xh, - \frac{\del \Sigma}{\del \Xi} (\xh) \Big ) 
     \cdot \Big (\tau   -  \frac{\del (\sigma_{E} - \sigma_{I})}{\del \Xi} (\xh) 
     \Big )
     \\
     \end{aligned}
$$
where $\Sigma = \sigma_{I} - \sigma_{E}$.
By \eqref{prop2}, \eqref{prop3}, $\mathcal E_{r}$ has
the simple form
\begin{equation}
\label{relentropy}
\mathcal E_{r}
= \frac{1}{2} |v - \hat v|^{2} + \Psi (\Xi , T - \frac{\del \sigma_{I}}{\del \Xi}) - 
\sigma_{E} (\xh)  - \frac{\del \sigma_{E} }{\del \Xi} (\xh) \cdot 
(\Xi - \xh)
\end{equation}

We now recall the identities: The H-theorem for the relaxation 
approximation
\allowdisplaybreaks{   %% from here on displays can break
\begin{align}
\label{hthm}
\del_{t} \Big ( \frac{1}{2} |v|^{2} &+ \Psi (\Xi, \tau) \Big )
- \pa (v_{i} S_{i \alpha} ) +
\frac{1}{\eps}
\frac{\del \Psi}{\del \tau^{A}} \, 
\big ( \tau^{A} - \frac{\del (\sigma_{E} - \sigma_{I})}{\del \Xi^{A}}\big )
= 0
\end{align}
and the energy equation for the extended elastodynamics system
\begin{align}
\label{lenergy}
\del_{t} \Big ( \frac{1}{2} |\vh|^{2} + \sigma_{E} (\xh)  \Big )
- \pa \Big (   \frac{\del \sigma_{E} }{\del \Xi^{A}} (\xh) 
\frac{\del \Phi^{A}}{\del F_{i \alpha} } (\fh) \vh_{i} \Big ) = 0
\end{align}

Finally we form the difference equations
\begin{equation*}
            \pt(v_{i}-\vh_{i}) - \pa \left ( T^{A}
       \frac{\partial \Phi^{A}}{\partial F_{i\alpha}}(F) 
       - \frac{\partial \sigma_{E} }{\partial \Xi^{A}} (\xh)
       \frac{\partial \Phi^{A}}{\partial F_{i\alpha}}(\fh)  \right ) = 
       0 ,
\end{equation*}
        \begin{equation*}
            \pt (\Xi^{A} - \xh^{A} ) - \pa \left (
            \frac{\partial \Phi^{A}}{\partial F_{i\alpha}}(F) \, v_{i} 
       - \frac{\partial \Phi^{A}}{\partial F_{i\alpha}}(\fh) 
       \, \vh_{i} \right ) =0
        \end{equation*}
and compute using \eqref{polyrelax} and \eqref{exelas} to obtain
\begin{align}
    &\pt \Big [ \vh_{i}(v_{i}-\vh_{i}) + \frac{\partial
      \sigma_{E} }{\partial \Xi^{A}} (\xh) \,  ( \Xi^{A} - \xh^{A} )\Big ]
 \nonumber\\
    & \qquad\ - \pa \left [ \vh_{i}\left ( T^{A} 
      \frac{\partial \Phi^{A}}{\partial F_{i\alpha}} (F) 
        - \frac{\partial  \sigma_{E} }{\partial \Xi^{A}} (\xh)
   \frac{\partial \Phi^{A}}{\partial F_{i\alpha}}(\fh)  \right )
       \right.
\nonumber\\
       &\qquad \qquad \qquad \ \left.
       + \frac{\partial  \sigma_{E} }{\partial \Xi^{A}} (\xh)
       \left (\frac{\partial \Phi^{A}}{\partial
       F_{i\alpha}}(F) \, v_{i} - \frac{\partial \Phi^{A}}{\partial
       F_{i\alpha}}(\fh) \, \vh_{i} \right )  \right ]
            \nonumber\\
             &= (\pt \vh_{i}) (v_{i}-\vh_{i}) + \pt \left (
       \frac{\partial  \sigma_{E} }{\partial \Xi^{A}} (\xh)
            \right ) (\Xi^{A} - \xh^{A} )
\nonumber\\
             & \qquad\ - \pa \vh_{i} \left (
                T^{A} \frac{\partial \Phi^{A}}{\partial F_{i\alpha}} (F)
       - \frac{\partial  \sigma_{E} }{\partial \Xi^{A}} (\xh)
       \frac{\partial \Phi^{A}}{\partial  F_{i\alpha}}(\fh)  \right ) 
\nonumber\\
      &\qquad\qquad \qquad 
      - \pa \left ( \frac{\partial  \sigma_{E} }{\partial \Xi^{A}} (\xh)
      \right )\left ( \frac{\partial \Phi^{A}}{\partial
       F_{i\alpha}}(F) \, v_{i} - \frac{\partial \Phi^{A}}{\partial
       F_{i\alpha}}(\fh) \, \vh_{i}\right ) 
\nonumber \\
&= - \pa \left ( \frac{\partial  \sigma_{E} }{\partial \Xi^{A}} (\xh) \right )
 \Big ( \frac{\partial \Phi^{A}}{\partial F_{i\alpha}}(F) 
       - \frac{\partial \Phi^{A}}{\partial F_{i\alpha}}(\fh) 
       \Big ) \, v_{i} 
\nonumber \\
&\quad - \pa \vh_{i} 
\Big [ T^{A} \frac{\partial \Phi^{A}}{\partial F_{i\alpha}} (F) 
 - \frac{\partial  \sigma_{E} }{\partial \Xi^{A}} (\xh)
       \frac{\partial \Phi^{A}}{\partial  F_{i\alpha}}(\fh)
 - \frac{\del^{2} \sigma_{E}(\xh)}{\del \Xi^{A} \del \Xi^{B}} 
    \frac{\partial \Phi^{A}}{\partial  F_{i\alpha}}(\fh) (\Xi^{B} - 
    \xh^{B} ) \Big ]
\nonumber  \\
&= : I
\label{corr}
\end{align}

By rearranging the terms and using the null-Lagrangian property \eqref{nullag2}
we may rewrite $I$ in the form
\begin{align}
I &= 
- \pa \Big [  
\vh_{i} \frac{\partial  \sigma_{E} }{\partial \Xi^{A}} (\xh) 
   \left ( \frac{\partial \Phi^{A}}{\partial F_{i\alpha}}(F) 
    - \frac{\partial \Phi^{A}}{\partial F_{i\alpha}}(\fh) \right )
\Big ]
\nonumber \\
&\quad - \pa \left ( \frac{\partial  \sigma_{E} }{\partial \Xi^{A}} (\xh)
            \right )
  \left ( \frac{\partial \Phi^{A}}{\partial F_{i\alpha}}(F) 
    - \frac{\partial \Phi^{A}}{\partial F_{i\alpha}}(\fh) \right )
    ( v_{i} - \vh_{i} )
\nonumber \\
&\quad - (\pa \vh_{i}) \frac{\partial \Phi^{A}}{\partial  F_{i\alpha}}(\fh)
\Big ( \frac{\partial  \sigma_{E} }{\partial \Xi^{A}} (\Xi)
 - \frac{\partial  \sigma_{E} }{\partial \Xi^{A}} (\xh)
 - \frac{\del^{2} \sigma_{E}}{\del \Xi^{A} \del \Xi^{B}} (\xh) 
    (\Xi^{B} - \xh^{B} ) \Big )
\nonumber \\
&\quad - (\pa \vh_{i}) \left ( 
  \frac{\partial  \sigma_{E} }{\partial \Xi^{A}} (\Xi)
 - \frac{\partial  \sigma_{E} }{\partial \Xi^{A}} (\xh)  
      \right ) 
  \left ( \frac{\partial \Phi^{A}}{\partial F_{i\alpha}}(F) 
    - \frac{\partial \Phi^{A}}{\partial F_{i\alpha}}(\fh) \right )
\nonumber \\
&\quad  - (\pa \vh_{i}) 
\left ( T^{A} - \frac{\partial  \sigma_{E} }{\partial \Xi^{A}} (\Xi)
\right )
\frac{\partial \Phi^{A}}{\partial F_{i\alpha}}(F)
\nonumber \\
&= - \pa \left [  
\vh_{i} \frac{\partial  \sigma_{E} }{\partial \Xi^{A}} (\xh) 
   \left ( \frac{\partial \Phi^{A}}{\partial F_{i\alpha}}(F) 
    - \frac{\partial \Phi^{A}}{\partial F_{i\alpha}}(\fh) \right )
\right ]
- Q_{1} -Q_{2} - Q_{3} - L
\label{error}
\end{align}
That is the term $I$ is written as the sum of a divergence term 
plus the quadratic terms $Q_{i}$ plus a linear term $L$ that is controlled 
by the distance from equilibrium.

Combining \eqref{hthm}, \eqref{lenergy}, \eqref{corr} and 
\eqref{error} we arrive at the relative entropy identity
\begin{equation}
\label{relenpoly}
\begin{aligned}
\del_{t} \mathcal{E}_{r} &- \pa \mathcal{F}_{\alpha, r} +
\frac{1}{\eps} D
= Q_{1}  + Q_{2} + Q_{3} + L
\end{aligned}
\end{equation}
where the flux is 
\begin{equation}
\mathcal{F}_{\alpha, r} := 
\left ( T^{A} - \frac{\partial  \sigma_{E} }{\partial \Xi^{A}} (\xh)
\right ) (v_{i} - \vh_{i}) \frac{\partial \Phi^{A}}{\partial F_{i\alpha}}(F)
\end{equation}
the dissipation is
\begin{equation}
\frac{1}{\eps} D = 
\frac{1}{\eps}
\frac{\del \Psi}{\del \tau^{A}}
\Big  ( \Xi,  T - \frac{\del \sigma_{I} }{\del \Xi} \Big ) \,
\big ( T^{A} - \frac{\del \sigma_{E} }{\del \Xi^{A}}\big )
\end{equation}
the quadratic errors $Q_{i}$ are 
\begin{equation}
\label{quad}
\begin{aligned}
Q_{1} &= \pa \left ( \frac{\partial  \sigma_{E} }{\partial \Xi^{A}} (\xh)
            \right )
  \left ( \frac{\partial \Phi^{A}}{\partial F_{i\alpha}}(F) 
    - \frac{\partial \Phi^{A}}{\partial F_{i\alpha}}(\fh) \right )
    ( v_{i} - \vh_{i} )
 \\
 Q_{2} &= (\pa \vh_{i}) \frac{\partial \Phi^{A}}{\partial  F_{i\alpha}}(\fh)
\Big ( \frac{\partial  \sigma_{E} }{\partial \Xi^{A}} (\Xi)
 - \frac{\partial  \sigma_{E} }{\partial \Xi^{A}} (\xh)
 - \frac{\del^{2} \sigma_{E}(\xh) }{\del \Xi^{A} \del \Xi^{B}}
    (\Xi^{B} - \xh^{B} ) \Big )
 \\
 Q_{3} &=  (\pa \vh_{i}) \left ( 
  \frac{\partial  \sigma_{E} }{\partial \Xi^{A}} (\Xi)
 - \frac{\partial  \sigma_{E} }{\partial \Xi^{A}} (\xh)  
      \right ) 
  \left ( \frac{\partial \Phi^{A}}{\partial F_{i\alpha}}(F) 
    - \frac{\partial \Phi^{A}}{\partial F_{i\alpha}}(\fh) \right )
\end{aligned}
\end{equation}
and the linear error $L$ is
\begin{equation}
\label{line}
L = (\pa \vh_{i}) 
\left ( T^{A} - \frac{\partial  \sigma_{E} }{\partial \Xi^{A}} (\Xi)
\right )
\frac{\partial \Phi^{A}}{\partial F_{i\alpha}}(F)
\end{equation}
Identity \eqref{relenpoly} is the key on which the stability and 
convergence analysis of section \ref{secstab} is based.
}      %endaalowdisplaybreaks

%\vfil\eject

\section{Stability theorem}
\label{secstab}

Consider a family of smooth solutions $\{(v^{\eps}, F^{\eps}, \tau^{\eps})\}_{\eps > 0}$ , $\tau^\eps = T^\eps - \nabla_\Xi \sigma_I (\Phi (F^\eps))$,
to the relaxation system \eqref{relpoly}.
We wish to compare them with a smooth solution $(\vh, \fh)$ of the equations 
of polyconvex elastodynamics \eqref{pelas}.
For simplicity of notation, we drop the $\eps$-dependence from the solution of the relaxation system.
The data $F_{0}$ and $\fh_{0}$ are taken gradients; 
this property is propagated by \eqref{pelas}$_2$ and both $F$ and $\fh$ are gradients for all times.
The function $(v, \Phi(F), \tau)$ is a smooth solution of the augmented
relaxation system \eqref{polyrelax}
while the function $(\vh, \Phi (\fh))$ satisfies
the extended elastodynamics equations \eqref{exelas}. From the results of section 
\ref{sec3b},
smooth solutions of \eqref{polyrelax} and \eqref{exelas}
satisfy \eqref{relenpoly}.  

The identity \eqref{relenpoly} is inherited 
by $(v, \Phi(F), \tau)$ and $(\vh, \Phi (\fh))$.
The resulting relative energy and associated flux,
\begin{equation}
\label{relaxrelen}
\begin{aligned}
e_{r} &= \mathcal{E}_{r} \Big (v, \Phi(F), \tau \; \big | \;  \vh, \Phi(\fh), 
\frac{\del (\sigma_{E} - \sigma_{I})}{\del \Xi} (\Phi(\fh))  \Big )
\\
  &=
\frac{1}{2} |v-\vh|^{2} 
 + \Psi \Big ( \Phi(F), T - \frac{\del \sigma_{I}}{\del \Xi}(\Phi(F)) \Big )
- \sigma_{E}(\Phi(\fh))
\\
&\qquad \qquad
    -\frac{\partial \sigma_{E}}{\partial \Xi^{A}}(\Phi(\fh))
               (\Phi(F)^{A} - \Phi(\fh)^{A})\, ,
\end{aligned}
\end{equation}
\begin{equation}
\label{relaxrelflux}
\begin{aligned}
f_{\alpha} &=
\mathcal{F}_{\alpha , r} \Big (v, \Phi(F), \tau \; \big | \;  \vh, \Phi(\fh), 
\frac{\del (\sigma_{E} - \sigma_{I})}{\del \Xi} (\Phi(\fh))  \Big )
\\
  &=
   \left ( T^{A} -
    \frac{\partial \sigma_{E}}{\partial \Xi^{A}}(\Phi(\fh))
    \right)
(v_{i}-\vh_{i})
\frac{\partial \Phi^{A}}{\partial F_{i\alpha}}(F) \, ,
\end{aligned}
\end{equation}
satisfy
\begin{equation}
\label{finrelen}
\begin{aligned}
\del_{t} e_{r} &- \pa f_{\alpha} +
\frac{1}{\eps} D
= Q_{1}  + Q_{2} + Q_{3} + L
\end{aligned}
\end{equation}
where the $Q_{i}$, $L$ and $D$ are now computed for $\Xi = \Phi(F)$ 
and $\xh = \Phi (\fh)$.

We prove convergence of the relaxation system 
to polyconvex elastodynamics so long as the limit solution is smooth.

\begin{theorem}
\label{theorelax}
Let $(v^{\e}, F^{\e}, T^{\e})$, $F^{\e} = \nabla y^{\e}$, be
smooth solutions of \eqref{relpoly} and $(\vh, \fh)$, 
$\fh = \nabla \yh$, 
a smooth solution of \eqref{pelas}, defined on $\R^{d} \times [0,T]$
and decaying fast as $|x|\to\infty$.
The relative energy $e_{r}$  defined in \eqref{relaxrelen}
satisfies \eqref{finrelen}.
Assume that $\sigma_{I}$, $\sigma_{E}$ satisfy for some constants
$\gamma_{I} > \gamma_{v} > 0$ and $M > 0$ the
hypotheses 
     \begin{align}
     \label{hypa1}
     \nabla^{2} \sigma_{I}  \ge \gamma_{I} \mathbb{I} &> \gamma_{v} \mathbb{I} \ge
     \nabla^{2}  ( \sigma_{I} - \sigma_{E}) > 0 \, ,
     \tag {h$_{1}$}
     \\
     \label{hypo2}
     |\nabla^{2}  \sigma_{E}|  &\le M  \, , \quad
     |\nabla^{3} \sigma_{E} | \le M \, .
     \tag {h$_{2}$}
     \end{align}
There exists a constant
$s$ and
$C =C (T, \gamma_{I},\gamma_{v}, M, \nabla \vh, \nabla \fh)>0$ independent of
$\e$ such that
       \begin{equation*}
          \int_{\R^{d}}  e_{r} (x,t) dx \le C
          \left ( \int_{\R^{d}}  e_{r} (x,0) dx  +\e \right)\, .
       \end{equation*}
In particular, if the data satisfy
\begin{equation*}
\int_{\R^{d}}  e^{\eps}_{r}
(x,0) dx \longrightarrow 0 \, , \quad \text{as $\e\downarrow 0$}\, ,
\end{equation*}
then
       \begin{align*}
  \sup_{t\in [0,T]} \int_{\R^{d}} |v^{\e} - \vh |^{2}
  + |F^{\e} - \fh|^{2} + |\tau^{\e} - \tau_{\infty} (\fh)|^{2} dx 
  \longrightarrow 0 \, ,
       \end{align*}
where $\tau_{\infty} (\fh) = \frac{ \del (\sigma_{E} - 
\sigma_{I})}{\del \Xi} (\Phi (\fh))$.
\end{theorem}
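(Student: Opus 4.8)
The whole argument is a Grönwall estimate built on the relative energy identity \eqref{finrelen}, which is already in hand for the smooth pair $(v,\Phi(F),\tau)$, $(\vh,\Phi(\fh))$. First I would integrate \eqref{finrelen} over $\R^{d}$; by the decay hypothesis the flux $f_{\alpha}$ integrates to zero, leaving
\begin{equation*}
\frac{d}{dt}\int_{\R^{d}} e_{r}\,dx + \frac{1}{\eps}\int_{\R^{d}} D\,dx = \int_{\R^{d}}\big(Q_{1}+Q_{2}+Q_{3}+L\big)\,dx .
\end{equation*}
The dissipation is handled with the construction of §\ref{sec3a}: writing $\tau = T - \nabla_{\Xi}\sigma_{I}(\Phi(F))$ and $\tau_{*} = -\nabla_{\Xi}\Sigma(\Phi(F))$, one has $\tfrac{\del\Psi}{\del\tau^{A}} = \nabla_{\tau}G(\tau)-\nabla_{\tau}G(\tau_{*})$ and $T^{A}-\tfrac{\del\sigma_{E}}{\del\Xi^{A}} = \tau^{A}-\tau_{*}^{A}$, so that, by \eqref{prop1} and $\nabla^{2}_{\tau}G = [\nabla^{2}_{\Xi}\Sigma]^{-1}\ge \tfrac{1}{\gamma_{v}}\mathbb{I}$ (from \eqref{hypa1}),
\begin{equation*}
D \;\ge\; \tfrac{1}{\gamma_{v}}\,\big|T - \nabla_{\Xi}\sigma_{E}(\Phi(F))\big|^{2}\;\ge\;0 .
\end{equation*}
For coercivity of the left side, \eqref{prop2}--\eqref{prop3} show that $e_{r}$ of \eqref{relaxrelen} equals $\tfrac12|v-\vh|^{2}$ plus the relative entropy of the \emph{convex} function $\Psi$ between $(\Phi(F),\tau)$ and $(\Phi(\fh),\tau_{\infty}(\fh))$; since $\nabla^{2}_{(\Xi,\tau)}\Psi\ge\delta\mathbb{I}$ by Lemma \ref{lemcon}, Taylor's formula gives $e_{r}\ge \tfrac12|v-\vh|^{2}+\tfrac{\delta}{2}\big(|\Phi(F)-\Phi(\fh)|^{2}+|\tau-\tau_{\infty}(\fh)|^{2}\big)$, which already controls $|v-\vh|^{2}+|F-\fh|^{2}+|\tau-\tau_{\infty}(\fh)|^{2}$ because $F$ is the first block of $\Phi(F)$.

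The estimate of the right side rests on one algebraic fact: each $\Phi^{A}$ is a null Lagrangian of degree $\le 3$, so $F\mapsto \tfrac{\del\Phi^{A}}{\del F_{i\alpha}}(F)$ is affine in $F$ on the $\cof$ block (with constant coefficients) and equals $(\cof F)_{i\alpha}$ on the $\dt$ block; hence $\big|\tfrac{\del\Phi^{A}}{\del F_{i\alpha}}(F)-\tfrac{\del\Phi^{A}}{\del F_{i\alpha}}(\fh)\big|\le C|\Phi(F)-\Phi(\fh)|$ with a universal $C$, \emph{with no $L^{\infty}$ bound on $F^{\eps}$ needed}. Using this together with \eqref{hypo2} and the smoothness/decay of $(\vh,\fh)$ — which make $\nabla\vh$, $\nabla\fh$, $\Phi(\fh)$, $\nabla\Phi(\fh)$ and $\tfrac{\del\Phi}{\del F}(\fh)$ bounded — each of $Q_{1},Q_{2},Q_{3}$ in \eqref{quad} is bounded pointwise by $C\big(|\Phi(F)-\Phi(\fh)|^{2}+|v-\vh|^{2}\big)\le C e_{r}$ (the term $Q_{2}$ via the second-order Taylor remainder of $\nabla\sigma_{E}$, the terms $Q_{1},Q_{3}$ via the Lipschitz bounds above), with $C=C(M,\nabla\vh,\nabla\fh)$.

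It remains to dispose of the nonequilibrium term $L$ in \eqref{line}. Splitting $\tfrac{\del\Phi^{A}}{\del F_{i\alpha}}(F)=\tfrac{\del\Phi^{A}}{\del F_{i\alpha}}(\fh)+\big(\tfrac{\del\Phi^{A}}{\del F_{i\alpha}}(F)-\tfrac{\del\Phi^{A}}{\del F_{i\alpha}}(\fh)\big)$ — the first piece bounded, the second $\le C|\Phi(F)-\Phi(\fh)|$ — and applying Young's inequality to the factor $T^{A}-\tfrac{\del\sigma_{E}}{\del\Xi^{A}}(\Phi(F))$ gives $|L|\le \tfrac{1}{2\gamma_{v}\eps}|T-\nabla_{\Xi}\sigma_{E}(\Phi(F))|^{2}+C\eps(1+e_{r})$. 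The first piece is absorbed by $\tfrac1\eps D$; integrating, we obtain
\begin{equation*}
\frac{d}{dt}\int_{\R^{d}} e_{r}\,dx + \frac{1}{2\gamma_{v}\eps}\int_{\R^{d}}\big|T-\nabla_{\Xi}\sigma_{E}(\Phi(F))\big|^{2}\,dx \;\le\; C\int_{\R^{d}} e_{r}\,dx + C\eps ,
\end{equation*}
with $C=C(T,\gamma_{I},\gamma_{v},M,\nabla\vh,\nabla\fh)$. Dropping the dissipation and invoking Grönwall's inequality on $[0,T]$ yields $\int e_{r}(\cdot,t)\,dx\le C\big(\int e_{r}(\cdot,0)\,dx+\eps\big)$, and the stated $L^{2}$ convergence follows from the coercivity bound on $e_{r}$.

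\textbf{Main obstacle.} The delicate step is the linear term $L$: it is $O(1)$, not manifestly small, and carries the a priori unbounded factor $\tfrac{\del\Phi^{A}}{\del F}(F)$. The resolution is structural — peeling off $\tfrac{\del\Phi^{A}}{\del F}(\fh)$ and using that the remainder is \emph{linear} in $\Phi(F)-\Phi(\fh)$ (a consequence of the null-Lagrangian identity \eqref{nullag2} and the minors structure), so that the $O(1/\eps)$ dissipation $D$ can absorb it at the price of only an $O(\eps)$ error. This is precisely the point where the null-Lagrangian geometry and the uniform convexity of the polyconvex entropy $\Psi$ (Lemma \ref{lemcon}) are used jointly; a minor additional care is that $e_{r}$ measures the defect in $\Phi(F)$ rather than in $F$, which is harmless since $F$ is a component of $\Phi(F)$.
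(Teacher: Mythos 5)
Your proposal is correct and follows the paper's strategy step for step --- integrate \eqref{finrelen}, lower-bound $e_{r}$ by $c\big(|v-\vh|^{2}+|\Phi(F)-\Phi(\fh)|^{2}+|\tau-\tau_{\infty}(\fh)|^{2}\big)$ via Lemma \ref{lemcon}, lower-bound $D$ by $\tfrac{1}{\gamma_{v}}|T-\nabla_{\Xi}\sigma_{E}(\Phi(F))|^{2}$ via \eqref{prop1}, control $Q_{1},Q_{2},Q_{3}$ using \eqref{hypo2} and the minors identities giving $\big|\tfrac{\del\Phi}{\del F}(F)-\tfrac{\del\Phi}{\del F}(\fh)\big|\le C|\Phi(F)-\Phi(\fh)|$, absorb half the dissipation against $L$, and conclude by Gronwall. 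The one place where you genuinely deviate is the linear term $L$: the paper applies Young's inequality keeping the full factor $\tfrac{\del\Phi}{\del F}(F)$, producing the error $C\eps\int|\tfrac{\del\Phi}{\del F}(F)|^{2}dx$, which it then bounds uniformly in $\eps$ by invoking the H-theorem a priori estimate $\int|v|^{2}+|\Phi(F)|^{2}+|\tau|^{2}dx\le O(1)$ for the relaxation family; you instead peel off $\tfrac{\del\Phi}{\del F}(\fh)$ and absorb the remainder, which is linear in $\Phi(F)-\Phi(\fh)$, back into $e_{r}$ by coercivity. Your variant buys independence from the uniform energy bound on the relaxing solutions (i.e.\ no implicit assumption of uniformly bounded initial energy for \eqref{relpoly}), using only boundedness and square-integrability of $\nabla\vh$, $\tfrac{\del\Phi}{\del F}(\fh)$ coming from the smooth limit solution; the paper's version is a more direct one-line Young estimate at the cost of that extra a priori bound. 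One small point of care in your write-up: the pointwise bound $|L|\le\tfrac{1}{2\gamma_{v}\eps}|T-\nabla_{\Xi}\sigma_{E}(\Phi(F))|^{2}+C\eps(1+e_{r})$ cannot be read with a literal constant $1$ (it would not be integrable over $\R^{d}$); the ``$1$'' must stand for the integrable coefficient $|\nabla\vh|^{2}\big|\tfrac{\del\Phi}{\del F}(\fh)\big|^{2}$, which is exactly what your integrated inequality with the term $C\eps$ uses, so the argument stands as intended.
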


\begin{proof}
The equation \eqref{finrelen}, 
$$
\begin{aligned}
\del_{t} e_{r} &+  \del_{\alpha}  f_{\alpha}
+ \frac{1}{\eps} D = J \, ,
\end{aligned}
$$
is integrated on $\R^{d} \times (0,t)$ and gives
\begin{align}
\int_{\R^{d}} e_{r} (x,t) dx &- \int_{\R^{d}} e_{r} (x,0) dx 
\nonumber 
\\
&\qquad + \frac{1}{\eps}  \int_{0}^{t} \int_{\R^{d}} D dx d\tau
= \int_{0}^{t} \int_{\R^{d}} J dx d\tau
\label{es1}
\end{align}

From lemma \ref{lemcon} and \eqref{relentropy} we see that there
exists a positive constant $c = c(\gamma_{I}, \gamma_{v})$ such that
$$
\mathcal {E}_{r} \ge c \left ( |v - \vh|^{2} + |\Xi - \xh||^{2} +
|\tau - \frac{\del (\sigma_{E} - \sigma_{I})}{\del \Xi} (\xh) |^{2} 
\right )
$$
and thus, by \eqref{relaxrelen},
$$
e_{r} \ge c \big ( |v - \vh|^{2} + |\Phi (F) - \Phi (\fh)|^{2} 
    + | \tau - \tau_{\infty} (\fh)|^{2} \big ) \, .
$$

Note that
\begin{equation}
\label{pr1}
\begin{aligned}
D &:=
\frac{\del \Psi}{\del \tau_{A}} \Big ( \tau^{A} - \frac{\del 
(\sigma_{E} - \sigma_{I})}{\del \Xi^{A}} \Big )
\\
&= ( \Xi + \nabla_{\tau} G) \cdot (\tau + \nabla_{\Xi} \Sigma )
\\
&= (\nabla_{\tau} G (\tau) - \nabla_{\tau} G ( -\nabla_{\Xi} \Sigma ) ) 
  \cdot (\tau + \nabla_{\Xi} \Sigma)
\\
&\ge
\left ( \min \nabla_{\tau}^{2} G \right ) |\tau + \nabla_{\Xi} 
\Sigma|^{2}
\\
&\ge
\frac{1}{\gamma_{v}} | \tau  - \nabla_{\Xi} (\sigma_{E} - \sigma_{I})|^{2}
\end{aligned}
\end{equation}

Let now $C$ be a positive constant depending on the $L^{\infty}$-norm 
of $\vh$, $\fh$, $\del_{\alpha} \vh$, $\del_{\alpha} \fh$ and the 
constants $\gamma_{I}$, $\gamma_{v}$ and $M$.  On account of \eqref{hypo2} and the smoothness of $(\vh, \fh)$, the term
$Q_2$ is of quadratic growth on $|\Xi -\xh| = |\Phi(F) - \Phi(\fh)|$. Using  \eqref{quad},
\eqref{hypo2}, and \eqref{line} we have
\begin{align*}
\int_{\R^{d}}  |Q_{1}| dx &\leq C \int_{\R^{d}}
|v-\vh|^{2}  + \Big |\frac{\partial \Phi}{\partial F}(F) -
\frac{\partial \Phi}{\partial F} (\fh) \Big |^{2}  dx\, ,
\\
\int_{\R^{d}}  |Q_{2}| dx &\leq C \int_{\R^{d}} |\Phi(F) - \Phi 
(\fh)|^{2} dx \, ,
\\
\int_{\R^{d}}  |Q_{3}| dx 
   &\leq C \int_{\R^{d}}
|\Phi(F) - \Phi  (\fh)|^{2}  
+ \Big |\frac{\partial \Phi}{\partial F}(F) -
\frac{\partial \Phi}{\partial F} (\fh) \Big |^{2} dx\, ,
\end{align*}
and
\begin{align*}
\int_{\R^{d}} |L| dx \le \frac{1}{\eps} \frac{1}{2 \gamma_{v}} 
\int_{\R^{d}} | \tau  - \nabla_{\Xi} (\sigma_{E} - \sigma_{I})|^{2} dx
+ C \eps \int_{\R^{d}} | \frac{\del \Phi}{\del F}(F) |^{2} dx  \, .
\end{align*}
 From the identities
$$
\frac{\del \det F}{\del F_{i \alpha}} = (\cof F)_{i \alpha} \, ,
\quad
\frac{\del (\cof F)_{i\alpha}}{\del F_{j \beta}} = \e_{ijk}\e_{\alpha
\beta \gamma} F_{k \gamma}\, ,
$$
we have
$$
\Big | \frac{\partial \Phi}{\partial F}(F) -
\frac{\partial \Phi}{\partial F}(\fh) \Big | \le C
| \Phi (F) - \Phi (\fh) |\, .
$$

Combining with \eqref{pr1} and \eqref{es1} we obtain
\begin{align}
&\int_{\R^{d}} e_{r} (x,t) dx 
+ 
\frac{1}{2 \eps \gamma_{v}} 
\int_{\R^{d}} | \tau  - \nabla_{\Xi} (\sigma_{E} - \sigma_{I})|^{2} dx
\nonumber
\\
&\qquad = \int_{\R^{d}} e_{r} (x,0) dx 
+  C \int_{0}^{t} \int_{\R^{d}} e_{r} (x, \tau) dx d\tau
\nonumber
\\
&\qquad \qquad
+ \eps C \int_{0}^{t} \int_{\R^{d}} | \frac{\del \Phi}{\del F}(F) |^{2} 
dx d\tau
\label{es2}
\end{align}

 The H-estimate implies that solution of the relaxation system 
\eqref{relpoly} satisfy the uniform (in $\eps$) bounds
\begin{align}
&\int_{\R^{d}} |v|^{2} + |\Phi (F)|^{2} + |\tau|^{2} dx 
+ 
\frac{1}{\eps \gamma_{v}} 
\int_{\R^{d}} | \tau  - \nabla_{\Xi} (\sigma_{E} - \sigma_{I})|^{2} dx
\nonumber
\\
&\qquad \le C \int_{\R^{d}} |v_{0}|^{2} + \Psi( \Phi(F_{0}) , \tau_{0}) dx \le 
O(1)
\end{align}
The result then follows from \eqref{es2} via Gronwall's inequality.
\end{proof}

\section{Gas dynamics in Eulerian coordinates}
\label{secgd}

As an example we work out the relaxation model that results when applying \eqref{relpoly} to  the equations of isentropic gas
dynamics. In preparation, we review the classical transformation of a balance law from Lagrangean to Eulerian
coordinates ({\it e.g.} \cite[Sec 2.2]{Daf10}).

\subsection{Transformation from Lagrangean to Eulerian coordinates}

Consider a motion $y (\cdot , t)  : \cR  \to \cR_t$  that maps a reference configuration $\cR$ onto the current configuration
$\cR_t$, for each $t  \in [0,T]$. The Lagrangean coordinates in the reference configuration are denoted by $x = (x_\alpha)_{\alpha = 1, ... , d}$
and the Eulerian coordinates in the current configuration by $y = ( y_j)_{j = 1, ... , d}$ with $d$ the (common) dimension of the ambient spaces.
The map $y(\cdot, t)$ is assumed globally invertible and a bi-Lipschitz homeomosphism, and we denote by
$$
v_i  = \frac{\del y_i }{\del t} \, , \quad F_{i \alpha}  = \frac{ \del y_i}{\del x_\alpha}
$$
the velocity and deformation gradient respectively.

Suppose  the fields $\phi = \phi (x,t)$, $\psi_\alpha = \psi_\alpha (x,t)$ and $p = p(x,t)$ are defined in the Lagrangian frame and 
satisfy the balance law
\begin{equation}
\label{lagbl}
\del_t \phi (x,t)  = \del_\alpha \psi_\alpha (x,t)  + p(x,t) \, .
\end{equation}
The fields $\phi$, $\psi_\alpha$ and $p$ can be scalar or vector fields.
The Lagrangian balance law \eqref{lagbl} can be transformed to an equivalent balance law expressed on the Eulerian coordinate frame
\begin{equation}
\label{eulpbl}
\del_t \left (  \frac{\phi}{\det F} \circ y^{-1} \right )  + \del_{y_j} \left (   \big ( \frac{\phi}{\det F} v_j  \big )  \circ y^{-1} \right )  = 
\del_{y_j} \left (  \big ( \frac{\psi_\alpha F_{j \alpha} }{\det F}  \big ) \circ y^{-1} \right )   +  \frac{p}{\det F} \circ y^{-1} \, .
\end{equation}
In expressing \eqref{eulpbl} we have used $y^{-1} (y,t)$  to be the inverse (in $x$) map of $y(x,t)$. This dependence is
often implied when stating the balance law in Eulerian coordinates and it is commonplace to write \eqref{eulpbl},
 using a somewhat ambivalent notation, in the form
\begin{equation}
\label{eulbl}
\del_t \left (  \frac{\phi}{\det F}  \right )  + \del_{y_j} \left (    \frac{\phi}{\det F}   u_j  \right )  = 
\del_{y_j} \left (   \frac{\psi_\alpha F_{j \alpha} }{\det F}   \right )   +  \frac{p}{\det F}  \, ,
\end{equation}
where $u_j = v_j \circ y^{-1}$  (or equivalently $u_j ( y(x,t), t) = v_j (x,t)$) stands for the velocity expressed
in Eulerian coordinates.

\subsection{Expression of gas dynamics in Lagrangean coordinates}
Consider  now the system of isentropic gas dynamics in Eulerian coordinates
\begin{align}
\label{gdmass}
\del_t  \rho + \del_j (\rho u_j) &= 0
\\
\label{gdmom}
\del_t ( \rho u_i ) + \del_j (\rho u_i u_j ) + \del_i p(\rho) &= 0
\end{align}
where $\rho = \rho(y,t)$ the density, $u = u (y,t)$ the velocity, $y \in \R^3$, and the pressure $p(\rho) > 0$ 
satisfies $p' (\rho) > 0$ which  guarantees hyperbolicity. The system of isentropic gas dynamics satisfies the energy conservation 
equation
\begin{equation}
\label{gdener}
\del_t (  \frac{1}{2} \rho |u|^2 + \rho e(\rho) ) + \del_j \big ( u_j (\frac{1}{2} \rho |u|^2 + \rho e(\rho) ) \big ) + \del_j (p(\rho) u_j ) = 0
\end{equation}
where the internal energy function $e(\rho)$ is related to the pressure through the usual relation
\begin{equation}
\label{inenpr}
e'(\rho) = \frac{p(\rho)}{\rho^2} > 0 \, .
\end{equation}
Note that $(\rho e)'' = \frac{p'}{\rho} > 0$ and that $\eta(\rho , m) = \frac{1}{2} \frac{|m|^2}{\rho}  + \rho e(\rho)$ 
is convex in the variables $(\rho, m)$,  $m = \rho u$ the momentum.

We proceed to calculate the associated Lagrangian form of the system \eqref{gdmass}-\eqref{gdmom}. For the velocity field $u(y,t)$ 
the initial value problem
\begin{equation}
\label{defmot}
\begin{cases}
\del_t y_i = u_i (y,t) & \\
y(x,0) = x  & x \in \cR \\
\end{cases}
\end{equation}
determines the motion $y(x,t)$. The local solvability of \eqref{defmot} is guaranteed for sufficiently smooth vector fields $u$
but the solution is not necessarily globally well defined. Here we will not discuss these important aspects and will proceed formally.
Given $y(x,t)$ we define $F = \nabla y$, $v = \del_t y$ and recall Abel's formula
$$
\del_t \det F = \div_y u \det F \, .
$$
Using the correspondence between the Lagrangean \eqref{lagbl} and Eulerian \eqref{eulbl} form of the balance law, we transform
the balance of mass equation \eqref{gdmass} to the form
$$
\del_t  (\rho \det F) = 0
$$
This implies that $\rho \det F =: \rho_0 (x)$ is independent of time. By assigning the reference density of the current configuration
(here selected as the $t=0$ instance of the current configuration) to be $\rho_0 (x) = 1$, we obtain 
\begin{equation}
\label{rdet}
\rho = \frac{1}{\det F} \, .
\end{equation}
In turn, using the  relations
$$
(F^{-1})_{\alpha i} F_{j \alpha}  = \delta_{i j} \, , \quad  (F^{-1})_{\alpha i} = \frac{1}{\det F}  ( \cof F )_{\alpha i} \, ,
$$
and \eqref{rdet}, \eqref{gdmom} is transformed into the Lagrangian form
\begin{equation}
\del_t  v_i  = \del_\alpha \big ( - p(\rho) (\det F) (F^{-1})_{\alpha i} ) =  \del_\alpha \big ( - p(\rho)  (\cof F)_{i \alpha} )
\end{equation}
Note the correspondence with the standard definition of the Cauchy stress for gas dynamics
$T_{i j} = - p(\rho) \delta_{ij}$ and its association with the Piola-Kirchhoff stress
$$
S_{i \alpha}  = T_{i j}( \det F ) ( F^{-1})_{\alpha j} = - p(\rho) (\det F) (F^{-1})_{\alpha i}
$$
Similarly, the energy equation \eqref{gdener} transforms to the Lagrangean form
$$
\del_t \big (  \frac{1}{2} |v|^2 + e(\rho)   \big ) = \del_\alpha \big ( - p(\rho) v_i (\cof F)_{i \alpha} \big )
$$
To the above equations we may add the nonlinear transport relation
$$
\del_t \det F = \del_\alpha ( v_i (\cof F)_{i \alpha} )
$$
which is a consequence of the null Lagrangians \eqref{nullag2} and part of \eqref{kinnl}.

In summary, the full set of Lagrangean equations for gas dynamics is
\begin{align}
\label{lagcom}
\del_t F_{i \alpha} &= \del_\alpha v_i
\\
\label{lagiden}
\del_t \det F &= \del_\alpha ( v_i (\cof F)_{i \alpha} )
\\
\label{lagmom}
\del_t v_i  &= \del_\alpha \big ( - p \Big ( \frac{1}{\det F} \Big ) (\cof F)_{i \alpha} \big )
\end{align}
and the Lagrangean form of the energy is
\begin{equation}
\label{lagener}
\del_t \Big (  \frac{1}{2} |v|^2 + e \Big ( \frac{1}{\det F} \Big )  \Big ) = \del_\alpha \Big (   - p \Big ( \frac{1}{\det F} \Big ) v_i  (\cof F)_{i \alpha}  \Big ) \, .
\end{equation}
The stored energy $W$ is of the form
\begin{equation}
\label{storen}
W(F) = e \Big ( \frac{1}{\det F} \Big ) =  g (\det F)  
\end{equation}
where
$$
\begin{aligned}
g(w) &: =  e \left ( \frac{1}{w} \right )
\\
\frac{dg}{dw} &= e' \Big ( \frac{1}{w} \Big ) \Big ( - \frac{1}{w^2} \Big ) = - p \Big ( \frac{1}{w} \Big ) \, , 
\\
\frac{d^2g}{dw^2}  &= p'  \Big ( \frac{1}{w} \Big ) \frac{1}{w^2} > 0 \, , 
\end{aligned}
$$
Hence $W$ is polyconvex,   the system \eqref{lagcom}-\eqref{lagmom} fits into the framework of polyconvex elasticity with the identification 
$g(w) : =  e \left ( \frac{1}{w} \right )$, and of course it is associated with an extended symmetrizable system  of the form
\eqref{enlarged} for the variables $(F, \Xi)$ with $\Xi = (F, w)$ in the present case.

\subsection{A relaxation model for gas dynamics in Lagrangean coordinates}
We consider now the relaxation model 
\begin{equation}
\label{gdrel}
\begin{aligned}
\del_t v_i  &=  \del_\alpha \Big ( \big [ - p_I  \big (  \frac{1}{\det F} \big ) + \tau \big ]  \cof F_{i \alpha} \Big )
\\
\del_t F_{i \alpha} &= \del_\alpha v_i
\\
\del_t \tau &= -\frac{1}{\eps} \Big ( \tau + p_E  \big (  \frac{1}{\det F} \big ) - p_I  \big (  \frac{1}{\det F} \big ) \Big )
\end{aligned}
\end{equation}
This model is a special case of the model \eqref{relpoly} with a scalar internal variable 
$$
T = - p_I  \big (  \frac{1}{\det F} \big ) + \tau
$$
We assume that the instantaneous $p_I(\rho)$ and equilibrium $p_E(\rho)$ pressure functions are strictly positive
and satisfy
\begin{equation}
\label{ass0}
\begin{aligned}
p'_I (\rho ) &> 0 \, , \quad e'_I (\rho) = \frac{p_I (\rho)}{\rho^2}
\\
p'_E (\rho ) &> 0 \, , \quad e'_E (\rho) = \frac{p_E (\rho)}{\rho^2}
\end{aligned}
\tag{a$_{0}$}
\end{equation}
with $e_I (\rho)$ and $e_E (\rho)$ the associated instantaneous and equilibrium internal energy functions.

Furthermore, \eqref{gdrel} is associated with the augmented relaxation system ({\it cf} \eqref{polyrelax}) 
\begin{equation}
\label{gdaugrel}
\begin{aligned}
\del_t v_i  &=  \del_\alpha \Big ( \big [ - p_I  \big (  \frac{1}{w} \big ) + \tau \big ]  \cof F_{i \alpha} \Big )
\\
\del_t F_{i \alpha} &= \del_\alpha v_i
\\
\del_t w  &=  \del_\alpha \big (  (\cof F)_{i \alpha} v_i  \big )
\\
\del_t \tau &= -\frac{1}{\eps} \Big ( \tau + p_E  \big (  \frac{1}{w} \big ) - p_I  \big (  \frac{1}{w}  \big ) \Big )
\end{aligned}
\end{equation}
with $w > 0$, and the theory developed in section \ref{secrel} can be applied directly to \eqref{gdaugrel} with the following
identifications
$$
\begin{aligned}
\sigma_I (w) &= e_I (\frac{1}{w} ) \, , \quad \frac{d \sigma_I}{dw} = - p_I ( \frac{1}{w})
\\
\sigma_E (w) &= e_E (\frac{1}{w} ) \, , \quad \frac{d \sigma_E}{dw} = - p_E ( \frac{1}{w})
\end{aligned}
$$
where by \eqref{ass0} both $\sigma_I (w)$ and $\sigma_w(w)$ are convex.

Following the procedure of section \ref{sec3a}, we obtain an entropy for the augmented relaxation system and
in turn for the reduced system \eqref{gdrel}. By multiplying \eqref{gdaugrel}$_1$ by $v_i$, \eqref{gdaugrel}$_3$ by
$\big ( - p_I  \big (  \frac{1}{w} \big ) + \tau \big )$, and \eqref{gdaugrel}$_4$ by $(w + G' (\tau))$ we obtain 
the entropy equation
\begin{equation}
\label{gdrelent}
\begin{aligned}
\del_t \Big (  \frac{1}{2} |v|^2 + e_I \big ( \frac{1}{w} \big ) + w \tau + G(\tau)  \Big )  
&=
\del_\alpha \Big (  \big [  - p_I   \big ( \frac{1}{w} \big ) + \tau \big ]  (\cof F)_{i \alpha} v_i \Big ) 
\\
&\quad    -   \frac{1}{\eps} \big ( w + G'(\tau) \big ) \Big (  \tau  +  (p_E - p_I) \big ( \frac{1}{w} \big ) \Big )
\end{aligned}
\end{equation}
where
$$
\begin{aligned}
\sigma_I (w) &:=  e_I \big ( \frac{1}{w} \big ) = - \int_1^w p_I \big ( \frac{1}{s} \big ) ds
\\
G(\tau) &:= - \int_1^\tau  \frac{1}{ (p_I - p_E)^{-1} (s) } ds 
\end{aligned}
$$
Indeed, if the pressure functions satisfy the hypothesis
\begin{equation}
\label{hypoth1}
(p_I - p_E)' (\rho) > 0  \quad \forall \rho > 0
\tag{a$_{1}$}
\end{equation}
then $(p_I - p_E)^{-1}$ and $G(\tau)$ are well defined, $\Sigma (w) = (\sigma_I - \sigma_E)(w)$ is convex and Lemma \ref{lemex}
guarantees the existence of a global, dissipative entropy 
$$
\Psi (w,\tau) = \sigma_I (w) + w \tau + G(\tau) \, .
$$
Using Lemma \ref{lemcon} it follows that the entropy $\Psi (w,\tau)$ is convex in $(w,\tau)$ provided
\begin{equation}
\frac{p'_I ( \frac{1}{w})}{w^2} \ge \frac{  (p_I - p_E)' ( \frac{1}{\bar w} )}{ {\bar w}^2} \, , \quad \forall w, \bar w > 0 \, .
\tag{a$_{2}$}
\end{equation}

\subsection{Expression of the relaxation model in Eulerian coordinates}
We next apply again the transformation procedure from Lagrangean to Eulerian coordinates
to express the model \eqref{gdrel} in Eulerian coordinates. We recall the expression  $\rho = \frac{1}{\det F}$ and
note that \eqref{gdrel} when expressed in Eulerian coordinates gives
\begin{equation}
\label{gdpressrelax}
\begin{aligned}
\del_t  \rho + \del_j (\rho u_j) &= 0
\\
\del_t ( \rho u_i ) + \del_j (\rho u_i u_j ) &=  \del_j  \big (  (- p_I(\rho) + \tau) \delta_{ij} \big )
\\
\del_t (\rho \tau) + \del_j ( \rho u_j \tau) &=  - \frac{1}{\eps} \rho \big ( \tau - p_I (\rho) + p_E (\rho) \big )
\end{aligned}
\end{equation}
This is a pressure relaxation model with two pressures an instantaneous and an equilibrium pressure.
Models of that general type have previously been observed in the literature, see for example \cite{CG67, Sul98}.
Such models correspond to a mechanism of relaxation of pressures with an
instantaneous and an equilibrium pressure response, the latter associated with the long time response of the model
in the way outlined  in section \ref{secrel}, and  are endowed with and entropy function defined locally 
(near equilibrium) which is dissipative \cite{Sul98}. The present model is endowed with a {\it globally defined}
entropy function. This can be seen by reverting the entropy dissipation identity \eqref{gdrelent} into Eulerian
coordinates. The process gives 
\begin{equation}
\label{gdeulrelent}
\begin{aligned}
&\del_t \left [ \frac{1}{2} \rho |v|^2 + \rho \big ( e_I (\rho) + \frac{1}{\rho} \tau + G(\tau) \big ) \right ]
+ \del_j \left ( u_j \big [ \frac{1}{2} \rho |v|^2 + \rho \big ( e_I (\rho) + \frac{1}{\rho} \tau + G(\tau) \big ) \big ] \right )
\\
&\qquad\qquad  = \del_j  \left ( (- p_I(\rho) + \tau) u_j \right )
-\frac{1}{\eps} \rho  \big ( \tau - (p_I - p_E) (\rho) \big ) \left ( \frac{1}{\rho} - \frac{1}{ (p_I - p_E)^{-1} (\tau)} \right )
\end{aligned}
\end{equation}
The existence of globally defined entropy relaxation functions is noted in \cite{CP97} in the context 
of internal energy relaxation models for gas dynamics and for general models with internal variables in
\cite{Tza99}. The present model provides another example that enjoys
this feature and is associated with pressure relaxation and is related to the
polyconvexity property of the elasticity system.

We finish by checking the conditions under which the above expressions are well defined. It is instructive
to check that directly. We always operate under the framework of \eqref{ass0} and let $P(\rho) = ( p_I - p_E )(\rho)$.
The entropy will be dissipative provided
\begin{equation}
\label{dissstr}
\rho (\tau - P(\rho) )  \Big ( \frac{1}{\tau} - \frac{1}{ P^{-1} (\rho) } \Big )  \ge  0 \, , \quad  \forall \rho, \tau > 0 \, .
\end{equation}
The equation \eqref{dissstr} holds if and only if the function $P$ is strictly increasing, that is if \eqref{hypoth1} holds.

Next, we examine the convexity of the function
\begin{equation}
\rho E(\rho, \tau) := \rho \left ( e_I (\rho) + \frac{1}{\rho} \tau + G(\tau) \right )
\end{equation}
by checking the eigenvalues of the Hessian matrix
\begin{equation*}
    \begin{pmatrix}
       \displaystyle{ \frac{d^2}{d\rho^2} (\rho e_I) } &
       \displaystyle{  G'(\tau) }
       \\
	\displaystyle{  G'(\tau) } & 
	\displaystyle{ \rho G'' (\tau) }
    \end{pmatrix}  \, .
\end{equation*}
The eigenvalues are strictly positive if
$$
\begin{aligned}
(\rho e_I )'' = \frac{p'_I}{\rho} > 0  \, ,
\\
(\rho e_I )'' \rho G''(\tau) - (G'(\tau) )^2 > 0 \, .
\end{aligned}
$$
To express the second condition, note that if $\tau = P(\bar \rho)$ then $\bar \rho = P^{-1} (\tau)$ and 
$$
G' (\tau) = - \frac{1}{ P^{-1} (\tau)} = - \frac{1}{\bar \rho} \, , \quad
G''(\tau) = \frac{1}{ [ P^{-1} (\tau)]^2} \frac{1}{ P' ( P^{-1} (\tau) )} = \frac{1}{ {\bar \rho}^2 P'(\bar \rho) }
$$
In view of \eqref{ass0}, the convexity of $\rho E(\rho, \tau)$ is equivalent to the condition
\begin{equation}
\label{hyppressdiff}
p'_I (\rho) > ( p'_I - p'_E) (\bar \rho) > 0  \, ,  \qquad \forall \rho, \bar \rho > 0 \, .
\tag {a$_3$}
\end{equation}
This can be combined with the fact
that the function $\frac{|m|^2}{\rho}$ is convex in $(\rho, m)$ to conclude that
the under \eqref{hyppressdiff} the entropy
\begin{equation}
H (\rho, \tau, m) := \frac{1}{2} \frac{|m|^2}{\rho} + \rho \big ( e_I (\rho) + \frac{1}{\rho} \tau + G(\tau) \big )
\end{equation}
is convex in $(\rho, \tau, m)$ with $m = \rho u$ the momentum.

\subsection*{Acknowledgements}
Partially supported by the EU FP7-REGPOT project "Archimedes Center for Modeling, 
Analysis and Computation" and  by the "Aristeia" program of the Greek Secretariat for Research.

% \nocite{Dafpc}
%\bibliography{corrado}
%\bibliographystyle{amsplain}
% \bibliographystyle{amsalpha}

\end{document}